\documentclass[11pt,reqno]{amsart}
\usepackage{amsmath,amsthm,amsfonts,amssymb,bm,graphicx}
\usepackage[alphabetic,initials]{amsrefs}
\usepackage{mathrsfs}
\usepackage{hyperref}
\hypersetup{colorlinks=true, citecolor=blue, linkcolor=blue, urlcolor=blue, pdfstartview=FitH}

\usepackage[top=1.1in,bottom=1.1in,left=1.1in,right=1.1in]{geometry}

\theoremstyle{plain}
\newtheorem{theorem}{Theorem}[section]

\newtheorem{lemma}{Lemma}[section]
\newtheorem{proposition}{Proposition}[section]
\newtheorem{definition}{Definition}[section]

\theoremstyle{definition}
\newtheorem*{Acknowledgements}{Acknowledgement}
\newtheorem{remark}{Remark}[section]

\renewcommand{\geq}{\geqslant}
\renewcommand{\leq}{\leqslant}

\DeclareMathOperator{\GL}{GL}

\newcommand{\Z}{\mathbb{Z}}

\newcommand{\dd}{\mathrm{d}}

\numberwithin{equation}{section}

\makeatletter
\def\Ddots{\mathinner{\mkern1mu\raise\p@
\vbox{\kern7\p@\hbox{.}}\mkern2mu
\raise4\p@\hbox{.}\mkern2mu\raise7\p@\hbox{.}\mkern1mu}}
\makeatother

\makeatletter
\DeclareRobustCommand\widecheck[1]{{\mathpalette\@widecheck{#1}}}
\def\@widecheck#1#2{%
    \setbox\z@\hbox{\m@th$#1#2$}%
    \setbox\tw@\hbox{\m@th$#1%
       \widehat{%
          \vrule\@width\z@\@height\ht\z@
          \vrule\@height\z@\@width\wd\z@}$}%
    \dp\tw@-\ht\z@
    \@tempdima\ht\z@ \advance\@tempdima2\ht\tw@ \divide\@tempdima\thr@@
    \setbox\tw@\hbox{%
       \raise\@tempdima\hbox{\scalebox{1}[-1]{\lower\@tempdima\box
\tw@}}}%
    {\ooalign{\box\tw@ \cr \box\z@}}}
\makeatother

\begin{document}

\title[Subconvexity for $\rm GL_2 \times GL_2$ $L$-functions]
{Subconvexity for $\rm GL_2 \times GL_2$ $L$-functions in the depth aspect}
\author{Tengyou Zhu}
\address{School of Mathematics, Shandong University
                       \\Jinan, Shandong 250100, China}
\email{zhuty@mail.sdu.edu.cn}	

\date{}

\keywords{Subconvexity, $\rm GL_2 \times GL_2$ $L$-functions, depth aspect, $p$-adic analysis,  method of stationary phase}

\subjclass[2020]{11F66, 11M41}

\begin{abstract}
Let $f$ and $g$ be holomorphic or Maass cusp forms for $\rm SL_2(\mathbb{Z})$
and let $\chi$ be a primitive Dirichlet character of prime power conductor $q=p^n$. 
For any given $\varepsilon>0$, we establish the following subconvexity bound
\begin{equation*}
L(1/2,f\otimes g \otimes \chi)\ll_{f,g,\varepsilon}q^{9/10+\varepsilon}.
\end{equation*}
The proof employs the DFI circle method with standard manipulations, including the conductor-lowering mechanism, Voronoi summation, and Cauchy--Schwarz inequality. The key input is certain estimates on the resulting character sums, obtained using the $p$-adic version of the van der Corput method.
\end{abstract}

\maketitle

\section{Introduction}
The analytic behavior of $L$-functions on the critical line encodes essential arithmetic information, and the rate of growth along the critical line is of central importance in analytic number theory.
Let $L(s,f)$ be a general $L$-function with an analytic conductor $\mathcal{Q}$.
In many applications of number theory, it is necessary to go beyond the convexity bound to get bounds of the form
\begin{equation}\label{11}
L(1/2, f)\ll \mathcal{Q}^{1/4-\delta+\varepsilon}
\end{equation}
for some absolute constant $\delta>0$.

The Rankin--Selberg convolution stands as a fundamental construction in the theory of 
$L$-functions, with its associated $L$-functions playing a pivotal role in modern number theory. These functions have demonstrated remarkable efficacy in resolving central problems across diverse areas, including establishing bounds for the generalized Ramanujan and Selberg conjectures, addressing the quantum unique ergodicity conjecture, and proving the strong multiplicity one theorem, and so on.
Particularly significant progress has been made in studying subconvexity estimates for 
${\rm GL}_2\times {\rm GL}_2$ Rankin--Selberg  $L$-functions (see~\cite{ASS}, \cite{HM}, \cite{HT}, \cite{KMV}).
In this paper, we investigate subconvexity bounds for ${\rm GL}_2\times {\rm GL}_2$ Rankin--Selberg  $L$-functions on the critical line in the depth aspect.

Let $f$ and $g$ be holomorphic or Maass cusp forms for $\rm SL_2(\mathbb{Z})$
 with normalized Fourier coefficients
$\lambda_f(m)$ and $\lambda_g(m)$ (i.e., $\lambda_f(1)=1$ and $\lambda_g(1)=1$), respectively.
Let $\chi$ be a primitive Dirichlet character of prime power conductor
$q=p^n$, where $p$ is prime and $n$ is an integer.
The $L$-function associated with $f$ and $g\otimes\chi$ is given by
$$
L(s,f\otimes g \otimes \chi)=L\big(2s,\chi^2\big)\sum_{m\geq1}
\frac{\lambda_f(m)\lambda_g(m)\chi(m)}{m^s}
$$
for $\text{Re} (s) > 1 $, which can be analytically extended to $\mathbb{C}$ and satisfies
a functional equation relating $s$ and $1-s$. The approximate functional equation
and the Phragm\'en-Lindel\"{o}f principle imply that
$$L(1/2,f\otimes g \otimes \chi)\ll_{f,g,\varepsilon}q^{1+\varepsilon}$$
for any $\varepsilon>0$, which is the convexity bound in the depth aspect.
The aim of the present paper is to prove the following subconvexity bound
using the delta symbol approach.
\begin{theorem}\label{main-theorem1}
Let $p$ be an odd prime.
Let $f$ and $g$ be holomorphic or Maass cusp forms for $\rm SL_2(\mathbb{Z})$
and let $\chi$ be a primitive Dirichlet character of prime power conductor $q=p^n$. 
For any given $\varepsilon>0$, we have
$$L(1/2,f\otimes g \otimes \chi)\ll_{f, g, \varepsilon}q^{9/10+\varepsilon}.$$
\end{theorem}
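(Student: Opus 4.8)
The plan is to combine the DFI $\delta$-method, in its conductor-lowering form, with Voronoi summation and a single application of Cauchy--Schwarz, reducing matters to $p$-adic oscillatory sums that we estimate by a $p$-adic analogue of the van der Corput method. To begin, the approximate functional equation, together with $L(1,\chi^{2})\ll_{\varepsilon}q^{\varepsilon}$ (and a harmless lower bound), reduces Theorem~\ref{main-theorem1} to the dyadic estimate
\[
S(N):=\sum_{m\geq1}\lambda_f(m)\lambda_g(m)\chi(m)\,V\!\Big(\frac{m}{N}\Big)\ \ll_{\varepsilon}\ q^{9/10+\varepsilon}N^{1/2},\qquad N\leq q^{2+\varepsilon},
\]
with $V$ a fixed bump function; the decisive range is $N\asymp q^{2}=p^{2n}$, where the target amounts to saving a factor $q^{1/10}$ over the Rankin--Selberg bound $S(N)\ll_{\varepsilon}N^{1+\varepsilon}$.

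Next I would separate the Hecke weights. Writing $S(N)=\sum_{m,\ell}\lambda_f(m)\lambda_g(\ell)\chi(m)\,\delta(m-\ell)\,V(m/N)V(\ell/N)$ places $\lambda_f$ and $\lambda_g$ on independent variables, and I would detect $m-\ell=0$ via the DFI formula fed through Munshi's conductor-lowering device: impose the congruence $m\equiv\ell\pmod{p^{k}}$ through additive characters modulo $p^{k}$, and apply the $\delta$-expansion at the lowered modulus $Q\asymp(N/p^{k})^{1/2}=p^{\,n-k/2}$, the parameter $k\leq n$ to be optimized at the end. Expanding $\chi(m)$ by its Gauss sum turns it into an additive character modulo $p^{n}$. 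After these steps $S(N)$ becomes, up to a negligible error, a weighted average over $c\leq Q$ with $(c,p)=1$ of a complete $\lambda_f$-sum against an additive character of modulus $cp^{n}$, times a complete $\lambda_g$-sum against an additive character of modulus $cp^{k}$.

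Then apply $\GL_2$ Voronoi summation to both inner sums: the $\lambda_f$-sum dualizes to length $\asymp(cp^{n})^{2}/N=c^{2}$, while the $\lambda_g$-sum dualizes to length $\asymp(cp^{k})^{2}/N$, which is $\gg1$ only when $c$ lies in a short range about $p^{\,n-k}$ (smaller moduli give a degenerate transform killed by the mean-zero of $\lambda_g$). A single Cauchy--Schwarz in the surviving long variable then removes the weight $\lambda_f$ (using $\sum_{m\leq M}|\lambda_f(m)|^{2}\ll_{\varepsilon}M^{1+\varepsilon}$), and Poisson summation in the remaining smooth variable collapses the resulting square into a main diagonal term of the expected size plus an off-diagonal contribution governed by a complete exponential sum to a modulus that is a power of $p$, the phase being a rational function assembled from the $\chi$-Gauss sum, the two Voronoi/Kloosterman phases, and the Poisson frequency.

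The crux, and the step I expect to be the main obstacle, is to beat the trivial (square-root-of-the-modulus, or worse) bound for these complete sums over $\Z/p^{a}\Z$. Since $p$ is odd, on the locus where the phase is $p$-adically nondegenerate one can extract the $p$-adic valuation, solve for the critical point $p$-adically, evaluate the resulting quadratic Gauss sums exactly, and iterate the $A$- and $B$-processes of van der Corput in the $p$-adic setting; this yields the required extra cancellation outside a thin set of degenerate moduli and frequencies (critical point on the ``wall'' $p\mid(\cdot)$, coinciding critical points, or higher-order vanishing of the Hessian), which one bounds trivially but must show to be genuinely thin. Feeding these estimates back and optimizing the conductor-lowering parameter $k$ (of an appropriate size in terms of $q$) yields the exponent $9/10$. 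The delicate points are precisely the $p$-adic stationary-phase bookkeeping: tracking valuations faithfully through the Voronoi and Poisson dualizations, correctly isolating the ramified and degenerate ranges, and carrying the van der Corput iteration through without exceeding the $q^{1/10}$ budget.
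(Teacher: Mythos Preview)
Your proposal follows essentially the same path as the paper: conductor-lowered DFI $\delta$-method with parameter $r$ (your $k$), double Voronoi, Cauchy--Schwarz in the long dual $\lambda_f$-variable, Poisson, and $p$-adic stationary phase plus a $p$-adic second derivative test on the resulting character sum, with the optimal choice $r=\lfloor 4n/5\rfloor$. Two small corrections: the small-$c$ contribution on the $\lambda_g$-side is not ``killed by the mean-zero of $\lambda_g$'' but is simply a short dual sum handled alongside the generic range, and no $AB$-iteration is needed---the paper evaluates the inner sum $G(m,\ell,c)$ exactly by $p$-adic stationary phase and then applies a single $p$-adic second derivative test (Blomer--Mili\'cevi\'c) to the remaining sum, whose outcome depends on $\mathrm{ord}_p(\ell_1-\ell_2)$ and accounts precisely for the ``degenerate'' cases you anticipate.
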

This improves the previous result of Sun~\cite{S}, which states 
$$
L(1/2,f\otimes g \otimes \chi)\ll_{f,g,\varepsilon}
p^{3/4}q^{15/16+\varepsilon}.
$$
In the $t$-aspect, Huang, Sun and Zhang~\cite[Corollary 1.5]{HSZ} have proved that
$$L(1/2+it,f\otimes g)\ll_{f,g,\varepsilon} (1+|t|)^{9/10+\varepsilon}.$$
Indeed, in principle the family $L(1/2,f\otimes g \otimes \chi)$ where $\chi$ runs over
characters modulo $p^n$ with $n\rightarrow\infty$ should behave like $L(1/2+it,f\otimes g)$
with $t\rightarrow\infty$. Since, both problems are arithmetic, we end up seeking
square root cancellations. Huang, Sun and Zhang~\cite{HSZ} had to appeal to the 
classical second derivative test for exponential integrals to show square root cancellations. 
In our case, we achieve the required cancellations 
by using the $p$-adic second derivative test 
(\cite[Theorem~5]{BM}) for exponential sums.
\begin{remark}
Since the holomorphic case is usually easier,
to simplify the argument, we prove Theorem~\ref{main-theorem1} only for the case of Maass forms.
In fact, the Voronoi formula for Maass forms is
slightly more complicated than that of the holomorphic case.
\end{remark}
\begin{remark}
Set $\mathcal{Q}=\big(q(1+|t|)\big)^4$, the hybrid subconvexity bounds  
$$L(1/2+it, f\otimes g \otimes \chi)\ll_{f, g, \varepsilon} \mathcal{Q}^{1/4-1/40+\varepsilon}$$
can be proved by combining our approach and that of Huang, Sun and Zhang~\cite{HSZ}, 
but this requires a rigorous proof. The exponent $\delta=1/40$ in~\eqref{11} appears in other contexts as well and it seems to be the limit of the delta symbol approach.
\end{remark}

\subsection{A brief history}
We will briefly recall the history of the problem but focus only on the in-depth aspects.
Let $\chi$ be a primitive Dirichlet character of prime power conductor $q=p^n$.
In the ${\rm GL}_1$ case, by introducing a general new theory of estimation of
short exponential sums involving $p$-adic analytic phases,
Mili\'{c}evi\'{c} \cite{MD} proved a sub-Weyl subconvexity bound for the central values $L(1/2,\chi)$
of Dirichlet $L$-functions of the form
$$
L(1/2,\chi)\ll p^v q^{\varpi}(\log q)^{1/2}
$$
with a fixed $v$ and $\varpi>\varpi_0\approx 0.1645$.
By developing a general result on $p$-adic approximation by rationals 
(a $p$-adic counterpart to Farey dissection) 
and a $p$-adic version of van der Corput's method for exponential sums,
Blomer and Mili\'{c}evi\'{c} \cite{BM}
also considered the ${\rm GL}_2$ case. For $g$ being a holomorphic or 
Maass newform for ${\rm SL}_2(\mathbb{Z})$, they showed that 
$$
L\left(1/2+it, g\otimes\chi\right)\ll_{g,\varepsilon}(1+|t|)^{5/2}p^{7/6}q^{1/3+\varepsilon},
$$
where the implied constant on $p$ and $t$ is explicit and polynomial. 
Using the conductor-lowering trick introduced by Munshi in~\cite{Mun3}, 
Munshi and Singh~\cite{MS} proved the same result using the approach in \cite{Mun3}.
Sun and Zhao~\cite{SZ} extended Munshi and Singh's results
in \cite{MS} to the ${\rm GL}_3$ case and proved that for
$\pi$ being a Hecke-Maass cusp form for ${\rm SL}_3(\mathbb{Z})$,
$$
L\left(1/2,\pi\otimes \chi\right)\ll_{\pi,\varepsilon} p^{3/4}q^{3/4-3/40+\varepsilon}
$$
for any $\varepsilon>0$. Sun~\cite{S} also establishes 
a subconvexity bound for the $\rm GL_2 \times GL_2$ case. 
The ideas of treating character sums in the paper~\cite{SZ} and~\cite{S} can be
applied to prove a ${\rm GL}_3\times {\rm GL}_2$ subconvexity bound of the form 
$$
L\left(1/2,\pi \otimes f \otimes \chi\right)\ll_{\pi,f,\varepsilon} q^{3/2-3/20+\varepsilon},
$$
where $\pi$ is a Hecke-Maass cusp form for ${\rm SL}_3(\mathbb{Z})$, $f$ is a holomorphic or Maass cusp form for ${\rm SL}_2(\mathbb{Z})$. This result has been proved by Kumar, Mallesham and Singh~\cite{KMS}.

\subsection{Principle of the proof}
To illustrate the main idea of our approach, we provide 
a quick sketch of the proof for Theorem~\ref{main-theorem1}.
By the approximate functional equation, our starting point is to estimate
$$
S(N)=\sum_{m\sim N}\lambda_f(m)\lambda_g(m)\chi(m),
$$
with $N\sim q^2$. Applying the conductor-lowering mechanism
introduced by Munshi \cite{Mun1}, we have
$$
S(N)=\mathop{\sum\sum}_{\substack{\ell, m \sim N \\ \ell\equiv m \bmod p^r}}
\lambda_g(\ell)\lambda_f(m)\chi(m)\delta\bigg(\frac{m-\ell}{p^r}\bigg),
$$
where $r \geq 2$ is an integer to be chosen later. After 
applying Duke, Friedlander and Iwaniec's delta method with $c \leq C$ (see~\eqref{DFI's}) 
and removing the congruence $\ell\equiv m \bmod p^r$ by exponential sums we get
$$
S(N)\approx \frac{1}{Cp^r}\sum_{\substack{c\sim C \\ (c, p)=1}}\frac{1}{c}
\;\sideset{}{^*}\sum\limits_{a\bmod cp^r}\mathop{\sum\sum}_{\ell, m \sim N}
\lambda_g(\ell)\lambda_f(m)\chi(m)e\bigg(\frac{a(m-\ell)}{cp^r}\bigg),
$$
where here and throughout, the $*$ on the sum $\sum_{a \bmod c}$ over $a$ means that the sum is restricted to $(a,c)=1$, and we take $C=\sqrt{N/p^r}$. By using the Ramanujan conjecture on average, trivially we have $S(N)\ll N^2$.

We apply the voronoi summation formulas to both $m$ and $\ell$ sums.
Recall that $\chi$ is of modulus $q=p^n$.
Then the conductor of the $m$-sum has the size $Cp^n$. Applying $\rm GL_2$ Voronoi summation
to the $m$-sum we get that the dual sum is of size $C^2p^{2n}/N\asymp p^{2n-r}$.
The conductor for the $\ell$-sum has the size $cp^r$ and the dual sum after
$\rm GL_2$ Voronoi summation
is essentially supported on summation of size $C^2p^{2r}/N\asymp p^{r}$. 
Due to the presence of a Ramanujan sum in the character sum and the assumption of square-root cancellation for the remaining component, we achieve a saving of
$$
\frac{N}{Cp^n}\times \frac{N}{Cp^r}\times C p^{r/2}\sim N.
$$
Hence we are at the threshold and need to save a little more.
Generally we arrive at an expression of the form
$$
\sum_{c\sim C}\sum_{m\sim p^{2n-r}}\lambda_f(m)
\sum_{\substack{\ell \sim p^r \\\ell \equiv p^{r}\overline{p^{2n-r}} m \bmod c}}
\lambda_g(\ell)G(m,\ell,c),
$$
where
\begin{align*} 
G(m,\ell,c)=\frac{1}{q}\sum_{u \bmod p^n} \chi (u) 
S\big(u\overline{c}, m\overline{c}; p^n\big)
S\big(u \overline{c}, \ell\overline{c}; p^r\big).
\end{align*}
Next we apply the Cauchy--Schwarz inequality to get rid of the Fourier coefficients.
Then we need to deal with
$$
\sum_{c\sim C}\sum_{m\sim p^{2n-r}}\bigg|
\sum_{\substack{\ell \sim p^r \\\ell \equiv p^{r}\overline{p^{2n-r}} m \bmod c}}
\lambda_g(\ell)G(m,\ell,c)\bigg|^2.
$$
After opening the square and applying Poisson summation to the sum over 
$m$, we obtain two dominant terms: Diagonal term (from $\ell_1=\ell_2$) with a
saving factor of $p^r/C$. Off-diagonal term (from $\ell_1 \not =\ell_2$) with a
saving factor of
$$
\frac{p^{n-2r}}{C} \times \frac{p^{n+r}}{Cp^r}= \frac{p^{n-2r}}{Cp^{r/2}}.
$$
Therefore, we are able to save $p^r/C\sim p^{3r/2-n}$ from the diagonal term
and $p^{2n-r}/(Cp^{r/2})\sim p^{n-r}$
from the off-diagonal term. By setting the two savings to be of the same order,
the optimal choice for $r$ is given by $r=4n/5$. In total, we have saved
$N\times q^{1/10}$. It follows that
$$
L(1/2,f\otimes g \otimes \chi)\ll
N^{-1/2}S(N)\ll N^{1/2}q^{-1/10}\sim q^{1-1/10}.
$$
In the following sections we shall provide the proof of the theorem in detail.

\begin{Acknowledgements}
I would like to thank Qingfeng Sun, who suggested this problem to me, for her encouragement. 
I would also like to Djordje Mili\'cevi\'c for his very helpful discussions and suggestions.
\end{Acknowledgements}

\section{Preliminaries and Notation}
\label{prelim}

In this section we set up some notation and compile for the future reference a 
number of useful results, some of which are well-known.

\subsection{Notation}
We denote by $\mathbb{Z}_p$ the ring of $p$-adic integers in the field $\mathbb{Q}_p$ of $p$-adic numbers, and
by $\mathbb{Z}_p^{\times} = \mathbb{Z}_p \setminus p\mathbb{Z}_p$, the group of its units.
Then $\mathbb{Q}_p^{\times} = \mathbb{Q}_p \setminus \{0\}
= \bigsqcup_{k\in \mathbb{Z}}p^k\mathbb{Z}_p^{\times}$; for $x \in p^k\mathbb{Z}_p^{\times}$,
we write $\text{ord}_px=k$.

We write $e(x) = e^{2\pi i x}$ for $x\in \mathbb{R}$, and we write $\theta : \mathbb{Q}_p \rightarrow 
\mathbb{C}^\times$ for the standard additive character on $\mathbb{Q}_p$ trivial on $\mathbb{Z}_p$. 
Specifically, if $x=\sum_{j\geq j_0}a_jp^j\in \mathbb{Q}_p$, then
$\theta(x) = \exp\big(2\pi i \sum_{j < j_0}a_jp^j\big)$. 
In particular, on $\mathbb{Z}[1/p] \subseteq \mathbb{Q}_p \cap \mathbb{R}$ we have $\theta(\cdot) = e(\cdot)$, and
this relation is crucial for moving arithmetic oscillation between $p$-adic and archimedean
places.

If $k \geq 0$ and a domain $T\subseteq\mathbb{Z}_p$ are 
such that $(T+p^k\mathbb{Z}_p)\subseteq T$, and if $\psi$ is
a $p^k\mathbb{Z}_p$-periodic function, then by
\begin{equation*}
\sum_{u \bmod p^k, u\in T}\psi(u),
\end{equation*}
where the (finite) sum of $\psi(u)$ over arbitrary set of representatives
of classes in $T/p^k\mathbb{Z}_p$.

For $x\in \mathbb{Z}$ coprime to the modulus (or sometimes for $x\in \mathbb{Z}_p^{\times}$), 
the notation $\overline{x}$ will always
denote the multiplicative inverse to a modulus which will be obvious from the context. 
In particular, if no obvious modulus is specified, then $\bar{x}=x^{-1}$
will denote the inverse of $x$ in $\mathbb{Z}_p^{\times}$,
which agrees with the multiplicative inverse of $x$ to any prime power $p^n$. 
While both $\overline{x}$ and $x^{-1}$ have the same meaning for
for aesthetic reasons we usually give preference to the
former in notations that explicitly depend only on the congruence class of $x$ to an obvious
modulus (such as in factors, Kloosterman sums, and so on) and to the latter in expressions
of a more direct $p$-adic nature such as phases in the context of $p$-adic local analysis (even
when the expression happens to be locally constant).

We introduce the following terminology, including an auxiliary class of functions.
\begin{definition}\label{MM}
Let $\kappa \in \mathbb{Z}$. We denote by $\mathbf{M}_{p^{\kappa}}$ an 
arbitrary element of $p^\kappa\mathbb{Z}_p$, which may
be different from line to line.  
\end{definition}

\subsection{Postnikov Formula}

We recall the structure of multiplicative characters modulo $q=p^n$.
In this section, as everywhere else in the paper, $p$ denotes an odd prime;
all statements hold with minor but necessary modifications in the case $p=2$.
\begin{definition}
The $p$-adic logarithm, $\log_p : 1+p\mathbb{Z}_p \to p\mathbb{Z}_p$ is the analytic function given as
$$
\log_p (1 + x) := \sum_{j \geqslant 1} (-1)^{j-1} \frac{x^j}{j}.
$$
\end{definition}
Access to the above is critical due to the following lemma, 
with roots in Postnikov and which we quote from~\cite[Lemma 13]{MD}.
\begin{lemma}\label{Postnikov}
Let $\chi$ be a primitive character modulo $p^n$. Then there exists a $p$-adic unit $\alpha$ such that, for every $m \equiv 1 \bmod p$,
\begin{equation}\label{Postnikov formula}
\chi(m) = \theta \left( \frac{\alpha \log_p m}{p^n} \right).
\end{equation}
\end{lemma}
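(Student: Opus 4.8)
The plan is to prove Lemma~\ref{Postnikov}, the Postnikov formula expressing a primitive character modulo $p^n$ via the $p$-adic logarithm. Since the lemma is quoted from~\cite[Lemma 13]{MD}, the proof amounts to recalling the standard structure theory of $(\mathbb{Z}/p^n\mathbb{Z})^\times$ together with the $p$-adic exponential/logarithm correspondence. First I would recall that for odd $p$ the unit group $\mathbb{Z}_p^\times$ decomposes as $\mu_{p-1}\times(1+p\mathbb{Z}_p)$, where $\mu_{p-1}$ is the group of $(p-1)$-st roots of unity (Teichm\"uller representatives) and $1+p\mathbb{Z}_p$ is the pro-$p$ part; passing to the finite quotient, any character $\chi$ modulo $p^n$ restricted to elements $m\equiv 1\bmod p$ factors through the principal units $1+p\mathbb{Z}_p$ modulo $1+p^n\mathbb{Z}_p$.

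Next I would invoke the key analytic fact that, for odd $p$, the $p$-adic logarithm $\log_p:1+p\mathbb{Z}_p\to p\mathbb{Z}_p$ is a group isomorphism onto $p\mathbb{Z}_p$ (with inverse $\exp_p$), carrying the filtration $1+p^k\mathbb{Z}_p$ onto $p^k\mathbb{Z}_p$. Consequently, any continuous homomorphism $\chi:1+p\mathbb{Z}_p\to\mathbb{C}^\times$ that is trivial on $1+p^n\mathbb{Z}_p$ is of the form $m\mapsto\Theta\big(\tfrac{\beta\log_p m}{p^n}\big)$ for a suitable additive character $\Theta$ of $p\mathbb{Z}_p/p^n\mathbb{Z}_p\cong\mathbb{Z}/p^{n-1}\mathbb{Z}$ and some $\beta\in\mathbb{Z}_p$, simply because $\log_p$ identifies the source with $p\mathbb{Z}_p/p^n\mathbb{Z}_p$ and every character of a finite cyclic group is given by such an exponential. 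Writing $\Theta$ via the fixed standard additive character $\theta$ on $\mathbb{Q}_p$ gives $\chi(m)=\theta\big(\tfrac{\alpha\log_p m}{p^n}\big)$ for some $\alpha\in\mathbb{Z}_p$.

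Finally I would check that primitivity of $\chi$ forces $\alpha$ to be a $p$-adic unit. If $p\mid\alpha$, then $\theta\big(\tfrac{\alpha\log_p m}{p^n}\big)$ depends only on $\log_p m\bmod p^{n-1}\mathbb{Z}_p$, i.e.\ only on $m\bmod p^{n-1}$, so $\chi$ restricted to $1+p\mathbb{Z}_p$ would factor through modulus $p^{n-1}$; combined with the tame part this contradicts primitivity modulo $p^n$. Hence $\alpha\in\mathbb{Z}_p^\times$, as claimed. The only mildly delicate point — and the part one must state carefully rather than the part that is hard — is the bookkeeping identifying $\log_p$-image filtrations with powers of $p$ and ensuring the additive character is read off consistently with the normalization of $\theta$ fixed in the notation section; for odd $p$ there is no convergence subtlety, and the case $p=2$ (excluded here) is precisely where this bookkeeping would need the ``minor but necessary modifications'' alluded to above.
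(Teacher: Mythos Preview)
Your proposal is correct and gives the standard argument: for odd $p$, the $p$-adic logarithm $\log_p$ identifies $(1+p\mathbb{Z}_p)/(1+p^n\mathbb{Z}_p)$ with $p\mathbb{Z}_p/p^n\mathbb{Z}_p$, all characters of the latter are of the form $x\mapsto\theta(\alpha x/p^n)$, and primitivity of $\chi$ forces $\alpha\in\mathbb{Z}_p^\times$. Note, however, that the paper does not supply its own proof of this lemma at all---it merely quotes the statement from \cite[Lemma~13]{MD}---so there is nothing to compare your argument against; your write-up simply fills in what the paper leaves as a citation.
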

In fact, we do not need the full strength of~\eqref{Postnikov formula}. Form~\cite[(2.3)]{BM}, we will use the following
corollary, valid for every $\kappa \geq 1$ and every $u \in \Z_p^{\times}$, $t \in \Z_p$: 
\begin{equation}\label{chi expansion}
\chi(u+p^\kappa t) = \chi(u)\chi(1+p^{\kappa}u^{-1}t)
=\chi(u)\theta\left(\frac{\alpha}{p^n}\Big(\frac{1}{u}p^\kappa t
-\frac{1}{2u^2}p^{2\kappa} t^2\Big) + \mathbf{M}_{p^{3\kappa-n-\iota}} \right),
\end{equation}
in the sense of Definition~\ref{MM}. Here we denote $\iota=1$ if $p=3$ and $\iota=0$ otherwise.

\subsection{$p$-adic square roots}

It will be necessary to handle solutions to quadratic equations over $\Z_p$, which requires the use of $p$-adic square roots. While these square roots naturally arise in $p$-adic towers as
in~\cite[Section 2.4]{BM}, we keep our exposition elementary and only discuss square roots to a prime
power modulus $p^n$. For $p$ an odd prime and $x \in \Z_p^{\times 2}$, the congruence $u^2 \equiv x \bmod{p^n}$ has exactly two solutions modulo every $p^n$, which reside within two $p$-adic towers and limit to the solutions of $u^2=x$ as $n \to \infty$. We denote these solutions $\pm x_{1/2}$. For $(\, \cdot \,)_{1/2} :\Z_p^{\times 2}\to\Z_p^{\times}$ to be well-defined, a choice of square root for each $y\in (\Z / p \Z)^{\times 2}$ must be made. This set of choices propagates to $\Z_p^{\times 2}$ and represents one of the $2^{(p-1)/2}$ branches of the $p$-adic square root. Finally, $x/x_{1/2}$ and $(1/x_{1/2})^2=1/x$ for every $x \in \Z_p^{\times 2}$, simply because $x_{1/2}^2=x$.
 
Next we note that, for every $\kappa \geqslant 1$ and every $u \in \Z_p^{\times 2}$, $t \in \Z_p^{\times}$,
\begin{equation*}
\left(u_{1/2}+\frac{1}{2u_{1/2}}p^\kappa t - \frac{1}{8u_{1/2}^3}
p^{2\kappa} t^2\right)^2 \in (u+p^\kappa t)+ p^{3\kappa}\Z_p,
\end{equation*}
so that
\begin{equation*}
(u+p^\kappa t)_{1/2} \equiv u_{1/2}+\frac{1}{2u_{1/2}}p^\kappa t 
- \frac{1}{8u_{1/2}^3} p^{2\kappa} t^2  \;\bmod\; p^{3\kappa}.
\end{equation*}
With the terminology of Definition~\ref{MM}, we claim that actually 
\begin{equation}\label{square power series expansion}
(u+p^\kappa t)_{1/2} \equiv u_{1/2}+\frac{1}{2u_{1/2}}p^\kappa t 
- \frac{1}{8u_{1/2}^3} p^{2\kappa} t^2 +\mathbf{M}_{p^{3\kappa}}.
\end{equation}
For future reference, we note that, for all $u_1, u_2\in\Z_p^{\times 2}$,
\begin{equation}\label{ord-sqrt}
\text{ord}_p\big((u_1)_{1/2}-(u_2)_{1/2}\big)=\text{ord}_p(u_1-u_2).
\end{equation}

\subsection{Method of stationary phase}

In this section, we collect facts about the so-called $p$-adic method of stationary phase, a powerful tool in the study of complete exponential sums modulo prime powers analogous to the classical method of stationary phase for oscillatory exponential integrals. For more details, we refer to \cite[Lemmata~12.2 and~12.3]{IK} for a formulation with phases that are rational functions, or to \cite[Lemma~7]{BM} for a general statement.

Let $p > 2$ be a prime. For $s \in \{0, 1\}$ and $(A, p) = 1$, we define
$$\epsilon(A,p^s)=
\begin{cases}
1,                       &  {s =  0,}\\
\big(\frac{A}{p}\big),   &  {s =  1, \; p\equiv 1 \bmod 4,}\\
\big(\frac{A}{p}\big)i,  &  {s =  1, \; p\equiv 3 \bmod 4.}
\end{cases}$$
\begin{lemma}[$p$-adic stationary phase]\label{statphase-lemma}
Let $n \geq \kappa$, and let $T\subseteq\mathbb{Z}_p$ be such 
that $(T+p^\kappa\mathbb{Z}_p)\subseteq T$.
\begin{enumerate}
\item\label{MSP-claim1} Suppose that functions $\psi:T/p^n\mathbb{Z}_p \to \mathbb{C}^\times,\, 
\phi_1:T\to\mathbb{Q}_p$ satisfy
\begin{equation}
\label{diffble-eq1}
\psi(u+p^{\kappa}t) = \psi(u)\theta(\phi_1(u)\cdot p^{\kappa}t)
\end{equation}
for all $u\in T$ and $t\in\mathbb{Z}_p$. Then, we have
\begin{equation*}
\sum_{u \bmod p^n, u\in T}\psi(u)
=p^{n-\kappa}\sum_{\substack{u \bmod p^\kappa, u\in T \\ \phi_1(u)\in p^{-\kappa} \mathbb{Z}_p}}\psi(u).
\end{equation*}
\item\label{MSP-claim2}
Suppose that functions $\psi:T/p^n\mathbb{Z}_p \to \mathbb{C}^\times,\, 
\phi_1, \phi_2:T\to\mathbb{Q}_p$ satisfy
\begin{equation}\label{diffble-eq2}
\psi(u+p^{\kappa}t) = \psi(u)\theta\left(\phi_1(u)\cdot p^{\kappa}t
+\frac{1}{2}\phi_2(u)\cdot p^{2\kappa}t^2\right)
\end{equation}
for all $u\in T$ and $t\in\mathbb{Z}_p$. 
Assume that ${\rm ord}_p \phi_1(u) \geq -n$ and ${\rm ord}_p \phi_2(u) = \mu$ 
for every $u\in T$, and that $-2n\leq \mu \leq -2\kappa$.
Then, writing $\mu=-2r-\rho$ with $r\in \mathbb{Z}$ and $\rho\in\{0,1\}$,
\begin{equation*}
\sum_{u \bmod p^n, u\in T}\psi(u)=p^{n+\mu/2}
\sum_{\substack{u \bmod p^r, u\in T \\ \phi_1(u)\in p^{-r-\rho} \mathbb{Z}_p}}
\psi(x)\epsilon\big(\bar{2}(\phi_2(u))_0,p^{\rho}\big)
\theta\bigg(-\frac{{\phi_1(u)^2}}{2\phi_2(u)}\bigg).
\end{equation*}
\end{enumerate}
\end{lemma}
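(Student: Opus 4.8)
The plan is to prove part~\eqref{MSP-claim1} by splitting the summation variable at scale $p^{\kappa}$ and collapsing a complete additive character sum, and to prove part~\eqref{MSP-claim2} by first upgrading the local expansion~\eqref{diffble-eq2} to scale $p^{r}$ (at which the quadratic coefficient has $p$-adic valuation exactly $-\rho\in\{0,-1\}$) and then evaluating the resulting complete quadratic Gauss sum via completion of the square and the classical mod-$p$ Gauss sum.

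For part~\eqref{MSP-claim1}: since $(T+p^{\kappa}\mathbb{Z}_p)\subseteq T$, each residue class modulo $p^{n}$ meeting $T$ is uniquely of the form $u_0+p^{\kappa}t$ with $u_0$ running over representatives of $T/p^{\kappa}\mathbb{Z}_p$ and $t$ modulo $p^{n-\kappa}$, so that by~\eqref{diffble-eq1}
$$\sum_{u\bmod p^{n},\,u\in T}\psi(u)=\sum_{u_0\bmod p^{\kappa},\,u_0\in T}\psi(u_0)\sum_{t\bmod p^{n-\kappa}}\theta\!\big(\phi_1(u_0)p^{\kappa}t\big).$$
The inner sum is well defined (applying~\eqref{diffble-eq1} with $t=p^{n-\kappa}$ and using that $\psi$ is $p^{n}\mathbb{Z}_p$-periodic forces $\operatorname{ord}_p\phi_1(u_0)\geq-n$), and as a complete additive character sum it equals $p^{n-\kappa}$ when $\phi_1(u_0)\in p^{-\kappa}\mathbb{Z}_p$ and $0$ otherwise, which is exactly the asserted identity.

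For part~\eqref{MSP-claim2}: from $\mu=-2r-\rho\leq-2\kappa$ we get $r\geq\kappa$, hence $p^{r}\mathbb{Z}_p\subseteq p^{\kappa}\mathbb{Z}_p$ and $T$ is stable under $p^{r}\mathbb{Z}_p$-translation; substituting $t=p^{r-\kappa}v$ with $v\in\mathbb{Z}_p$ into~\eqref{diffble-eq2} gives the scale-$p^{r}$ expansion $\psi(y+p^{r}v)=\psi(y)\,\theta(\phi_1(y)p^{r}v+\tfrac12\phi_2(y)p^{2r}v^{2})$ for $y\in T$. Decomposing the classes modulo $p^{n}$ in $T$ as $y+p^{r}v$ ($y$ modulo $p^{r}$ in $T$, $v$ modulo $p^{n-r}$) reduces the sum to $\sum_{y\bmod p^{r},\,y\in T}\psi(y)H(y)$ with $H(y)=\sum_{v\bmod p^{n-r}}\theta(\phi_1(y)p^{r}v+\tfrac12\phi_2(y)p^{2r}v^{2})$, and the point is that $\operatorname{ord}_p(\tfrac12\phi_2(y)p^{2r})=\mu+2r=-\rho$. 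If $\rho=0$ this coefficient lies in $\mathbb{Z}_p$, the quadratic term vanishes identically, $H(y)$ is the linear complete sum equal to $p^{n-r}$ or $0$ according as $\phi_1(y)\in p^{-r}\mathbb{Z}_p$ or not, and on this support $\operatorname{ord}_p(\phi_1(y)^{2}/(2\phi_2(y)))\geq0$ so the phase factor in the target formula is trivial (as is $\epsilon(\,\cdot\,,p^{0})$), matching the claim. If $\rho=1$ we write $\tfrac12\phi_2(y)p^{2r}=p^{-1}\beta$ with $\beta=\bar2(\phi_2(y))_{0}\in\mathbb{Z}_p^{\times}$ and complete the square to obtain
$$H(y)=\theta\!\left(-\frac{\phi_1(y)^{2}}{2\phi_2(y)}\right)\sum_{v\bmod p^{n-r}}\theta\!\big(p^{-1}\beta(v+c)^{2}\big),\qquad c=\frac{\phi_1(y)p^{r+1}}{2\beta},$$
and then split according to whether $c\in\mathbb{Z}_p$, i.e.\ whether $\phi_1(y)\in p^{-r-1}\mathbb{Z}_p$. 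If it is, translating $v\mapsto v-c$ leaves $p^{n-r-1}$ times the classical Gauss sum $\sum_{v\bmod p}\theta(p^{-1}\beta v^{2})=\epsilon(\beta,p)\sqrt p$, producing the prefactor $p^{n-r-1}\sqrt p=p^{n+\mu/2}$ together with $\epsilon(\bar2(\phi_2(y))_{0},p)$ and the stationary phase $\theta(-\phi_1(y)^{2}/(2\phi_2(y)))$, exactly as stated. If $c\notin\mathbb{Z}_p$, then $\lambda:=-\operatorname{ord}_p(\phi_1(y)p^{r})\geq2$, and also $\lambda\leq n-r$ from $\operatorname{ord}_p\phi_1(y)\geq-n$; substituting $v\mapsto v+p^{\lambda-1}w$ with $w$ modulo $p$ fixes $H(y)$ while multiplying the summand by $\theta(\phi_1(y)p^{r+\lambda-1}w)$, a nontrivial additive character in $w$ modulo $p$ (the quadratic corrections $2\beta v\,p^{\lambda-2}w+\beta p^{2\lambda-3}w^{2}$ lie in $\mathbb{Z}_p$ since $\lambda\geq2$), so averaging over $w$ forces $H(y)=0$. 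Summing over $y$ then yields the formula.

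I expect the main effort here to be bookkeeping rather than structural: carefully tracking the exact power of $p$, the Legendre symbol, and the constant $\epsilon$-factor through the Gauss sum evaluation, and disposing of the degenerate configurations (for instance $r=n$; the boundary case $\mu=-2\kappa$, where at scale $p^{r}=p^{\kappa}$ the quadratic term is already integral and part~\eqref{MSP-claim2} collapses onto part~\eqref{MSP-claim1}; and $n-r=1$ in the $\rho=1$ branch). The one genuinely non-formal point is the non-stationary vanishing, which relies on choosing the shift direction $p^{\lambda-1}$ precisely so that the linear term contributes a nontrivial character modulo $p$ while every quadratic correction remains in $\mathbb{Z}_p$.
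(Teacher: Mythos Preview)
Your proof is correct and follows the same approach as the paper: decompose the sum at the appropriate $p$-adic scale and evaluate the inner sum either by orthogonality (part~\eqref{MSP-claim1}) or via the quadratic Gauss sum after completing the square (part~\eqref{MSP-claim2}). The paper's own proof of part~\eqref{MSP-claim2} is extremely terse and essentially defers to \cite[Lemma~7]{BM}, whereas you have written out the full computation, including the $p^{\lambda-1}$-shift argument for the non-stationary vanishing; this is exactly the content that lies behind the reference.
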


\begin{proof}
The first claim follows immediately by orthogonality from
\begin{align*}
\sum_{u \bmod p^n, u\in T}\psi(u)\;&=\sum_{u \bmod p^\kappa, u\in T}\sum_{t \bmod p^{n-\kappa}}\psi(u+p^\kappa t)\\
\;&=\sum_{u \bmod p^\kappa, u\in T}\psi(u)\sum_{t \bmod p^{n-\kappa}}\theta(\phi_1(u)\cdot p^{\kappa}t). 
\end{align*}
The second claim is immediate if $n=2\kappa$. The odd case is similar, using the classical evaluation of the quadratic Gauss sum; see~\cite[Lemma~7]{BM}.
\end{proof}

\begin{lemma}[Kloosterman sum evaluation]\label{kloost-eval}
Let $p$ be an odd prime, let $a, b\in \Z_p^{\times}$, let $n \geqslant 2$ 
and let  
\begin{equation*}
S\big(a,b;p^n\big)=\;\sideset{}{^*}\sum_{u\bmod{p^n}}
 \theta\left(\frac{au+b/u}{p^n}\right)
\end{equation*} 
be the Kloosterman sum. Then,  $S\big(a,b;p^n\big)=0$ 
if $a b \not\in \Z_p^{\times 2}$. 
Otherwise, if $a b \in \Z_p^{\times 2}$, denoting $\rho=0$ or
$1$ according to whether $n$ is even or odd,
\begin{equation*}
S\big(a, b;p^n\big) = p^{n/2}\sum_{\pm}\epsilon(\pm(a b)_{1/2}, 
p^{\rho})\theta\left(\pm \frac{2(a b)_{1/2}}{p^n}\right).
\end{equation*}
\end{lemma}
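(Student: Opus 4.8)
\emph{The plan} is to read the evaluation directly off the $p$-adic stationary phase lemma (Lemma~\ref{statphase-lemma}), applied to the phase $\psi(u)=\theta\big((au+b/u)/p^n\big)$ on $T=\Z_p^\times$. This $\psi$ is $p^n\Z_p$-periodic and takes values in $\mathbb{C}^\times$, so it is a legitimate input. First I would fix $\kappa=\lfloor n/2\rfloor$ (so $\kappa\geq 1$ and $T+p^\kappa\Z_p\subseteq T$), expand $(u+p^\kappa t)^{-1}=u^{-1}\sum_{j\geq 0}(-p^\kappa u^{-1}t)^j$, and note that since $3\lfloor n/2\rfloor\geq n$ for $n\geq 2$, all terms of degree $\geq 3$ in $t$ lie in $\Z_p$ and are killed by $\theta$. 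The surviving degrees $0,1,2$ give
\begin{equation*}
\psi(u+p^\kappa t)=\psi(u)\,\theta\!\left(\frac{a-bu^{-2}}{p^n}\,p^\kappa t+\frac12\cdot\frac{2bu^{-3}}{p^n}\,p^{2\kappa}t^2\right),
\end{equation*}
which is of the form \eqref{diffble-eq2} with $\phi_1(u)=(a-bu^{-2})/p^n$ and $\phi_2(u)=2bu^{-3}/p^n$. Since $p$ is odd and $a,b\in\Z_p^\times$, one has $\text{ord}_p\phi_1(u)\geq -n$ and $\text{ord}_p\phi_2(u)=-n=:\mu\in[-2n,-2\kappa]$; writing $\mu=-2r-\rho$ forces $r=\lfloor n/2\rfloor$ and $\rho\in\{0,1\}$ according to the parity of $n$, matching the $\rho$ in the statement.

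Next, the second part of Lemma~\ref{statphase-lemma} would give, after the bookkeeping $n+\mu/2=n/2$ and $n-r-\rho=r$,
\begin{equation*}
S(a,b;p^n)=p^{n/2}\sum_{\substack{u\bmod p^r\\ u^2\equiv ba^{-1}\,(p^r)}}\psi(u)\,\epsilon\big(\bar{2}\,(\phi_2(u))_0,p^\rho\big)\,\theta\!\left(-\frac{\phi_1(u)^2}{2\phi_2(u)}\right).
\end{equation*}
The congruence $u^2\equiv ba^{-1}\bmod p^r$ is solvable precisely when $ba^{-1}$ --- equivalently $ab$, since $ba^{-1}=ab\cdot a^{-2}$ --- is a square mod $p$; if $ab\notin\Z_p^{\times 2}$ the sum is empty and $S(a,b;p^n)=0$, giving the first assertion. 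Otherwise there are exactly two solutions, $u\equiv\pm(ba^{-1})_{1/2}\bmod p^r$, by the discussion of $p$-adic square roots in Section~\ref{prelim}.

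Then I would evaluate the summand. The point of the second part of Lemma~\ref{statphase-lemma} is that the product $\psi(u)\,\epsilon(\cdots)\,\theta(\cdots)$ is constant on each stationary class mod $p^r$, so I may substitute the exact square root $u=\pm(ba^{-1})_{1/2}\in\Z_p^\times$. There $a-bu^{-2}=0$, so $\phi_1(u)=0$ and $\theta(-\phi_1(u)^2/(2\phi_2(u)))=1$; and $au^2=b$ gives $au+b/u=2au$ with $(au)^2=ab$, so $au=\pm(ab)_{1/2}$, the two roots producing the two signs, whence $\psi(u)=\theta(\pm 2(ab)_{1/2}/p^n)$. For $\rho=1$ I would check that $\bar{2}\,(\phi_2(u))_0\equiv bu^{-3}\bmod p$ and that $\big(\tfrac{bu^{-3}}{p}\big)=\big(\tfrac{au}{p}\big)$, because their product is $\big(\tfrac{abu^{-2}}{p}\big)=\big(\tfrac{ab}{p}\big)=1$; hence $\epsilon(\bar{2}\,(\phi_2(u))_0,p)=\epsilon(au,p)=\epsilon(\pm(ab)_{1/2},p)$ with the sign matching that of the phase. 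Summing the two stationary contributions then produces
\begin{equation*}
S(a,b;p^n)=p^{n/2}\sum_{\pm}\epsilon\big(\pm(ab)_{1/2},p^\rho\big)\,\theta\!\left(\pm\frac{2(ab)_{1/2}}{p^n}\right),
\end{equation*}
as required.

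\emph{The main obstacle} is not conceptual but lies in the bookkeeping. The one place where the strength of Lemma~\ref{statphase-lemma} is genuinely needed is the claim that the full summand --- which a priori involves $\psi(u)$ depending on $u\bmod p^n$ --- is actually a function of $u\bmod p^r$ on the stationary classes, so that one may legitimately plug in the exact square root; this is precisely what the lemma packages. The other delicate point, relevant only for odd $n$, is aligning the quadratic-Gauss-sum factor $\epsilon(\bar{2}\,(\phi_2(u))_0,p)$ with the sign in $\theta(\pm 2(ab)_{1/2}/p^n)$, which reduces to the single identity $\big(\tfrac{ab}{p}\big)=1$ but requires a little care with the branch of $(\cdot)_{1/2}$. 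Alternatively, one could avoid Lemma~\ref{statphase-lemma} altogether: split $u=u_1+p^{\lfloor n/2\rfloor}w$, evaluate the inner sum over $w$ by orthogonality when $n$ is even and as a classical quadratic Gauss sum when $n$ is odd, and complete the square --- which is the same computation that underlies the lemma.
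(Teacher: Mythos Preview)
Your argument is correct and is exactly the natural derivation via Lemma~\ref{statphase-lemma}\eqref{MSP-claim2}. The paper gives no proof of its own here---it simply cites \cite[Lemma~8]{BM}---and your computation is precisely the $p$-adic stationary phase evaluation that underlies that reference, so the approaches coincide.
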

\begin{proof}
See~\cite[Lemma~8]{BM}.
\end{proof}

\begin{lemma}[Second derivative test]\label{Second derivative test}
Let $\omega \in \mathbb{R}$, let $\kappa_0, \upsilon, \lambda \in \mathbb{N}_0$
and $T \in \mathbb{Z}_p$ be such that $(T+p^{\kappa_0}\mathbb{Z}_p) \in T$, and let
$\psi:T \to \mathbb{C}^\times$, 
$\phi_1:T\to p^{-\upsilon}\mathbb{Z}_p$, 
$\phi_2:T\to p^{-\lambda}\mathbb{Z}_p^{\times}$, and $$\Omega:\{\kappa\in \mathbb{N}_0; \kappa\geq \kappa_0\}
\to \mathbb{N}_0$$ be functions such that $|\psi(u)|\leq \psi_0$,
\begin{equation}
\psi(u+p^{\kappa}t) = \psi(u)\theta\left(\phi_1(u)\cdot p^{\kappa}t
+\frac{1}{2}\phi_2(u)\cdot p^{2\kappa}t^2+\mathbf{M}_{p^{\Omega(\kappa)}}\right)
\end{equation}
for all $u\in T$, $t\in\mathbb{Z}_p$, $\kappa\geqslant\kappa_0$, and 
$$\Omega(\kappa) \geq \min (2\kappa-\lambda+1,0) \quad \text{for every} \ \kappa\geq \kappa_0.$$ 
Let $\kappa_1=\max(\lambda/2, \kappa_0)$, then we have
\begin{equation*}
\sum_{u \bmod p^n, u\in T}\psi(u)e(\omega u)\ll
\psi_0 (p^n+p^{\upsilon}+p^{\lambda})p^{\min(\kappa_1-\lambda+\kappa_0,0)}.
\end{equation*}
\end{lemma}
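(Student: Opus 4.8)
The statement is a $p$-adic avatar of the classical second derivative test, and the plan is to prove it by the $p$-adic van der Corput $B$-process: pass to a coarse $p$-adic scale on which the quadratic part of the phase becomes $\theta$-trivial, collapse the complete sum to a short sum supported on ``stationary points'' by orthogonality, and bound that short sum by counting the stationary points.

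First I would absorb the linear phase and dispose of the degenerate ranges. Writing $\omega=b/p^{n}+\theta_{0}$ with $b\in\Z$ and $|\theta_{0}|\leq 1/(2p^{n})$, one has $e(bu/p^{n})=\theta(bu/p^{n})$ for $u\in\Z$, so $\psi^{\natural}(u):=\psi(u)\theta(bu/p^{n})$ is again periodic modulo $p^{n}$ and obeys the same transformation law, but with $\phi_{1}$ replaced by $\phi_{1}^{\natural}:=\phi_{1}+b/p^{n}$ (which has $p$-adic order $\geq-n$ --- this is the source of the term $p^{n}$ in the bound) and with $\phi_{2},\Omega$ unchanged; the leftover factor $e(\theta_{0}u)$ oscillates by $O(1)$ over the range of summation and is removed by a routine completion/partial-summation argument. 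If $\lambda\leq 2\kappa_{0}$ or $\lambda>2n$, the asserted bound already exceeds the trivial bound $\psi_{0}p^{n}$ and there is nothing to do; if moreover $\upsilon>n$ the argument below needs only cosmetic changes. So I may assume $2\kappa_{0}<\lambda\leq 2n$ and $\upsilon\leq n$, and set $m:=\lceil\lambda/2\rceil$, so that $\kappa_{0}<m\leq n$ and $\kappa_{0}\leq\lfloor\lambda/2\rfloor$.

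Next I would expand $\psi^{\natural}$ at scale $m$. Since $2m\geq\lambda$ we have $\tfrac12\phi_{2}(u)p^{2m}t^{2}\in\Z_{p}$, and since $\Omega(m)\geq\min(2m-\lambda+1,0)=0$ we have $\mathbf{M}_{p^{\Omega(m)}}\in\Z_{p}$; hence $\theta$ of each equals $1$ and $\psi^{\natural}(u+p^{m}t)=\psi^{\natural}(u)\,\theta\bigl(\phi_{1}^{\natural}(u)\,p^{m}t\bigr)$ for all $u\in T$, $t\in\Z_{p}$. The first part of Lemma~\ref{statphase-lemma} now gives
\[
\sum_{u\bmod p^{n},\,u\in T}\psi^{\natural}(u)
=p^{\,n-m}\sum_{\substack{u\bmod p^{m},\,u\in T\\ \phi_{1}^{\natural}(u)\in p^{-m}\Z_{p}}}\psi^{\natural}(u),
\]
so it remains to show that the number $\mathcal{N}$ of $u\bmod p^{m}$ in $T$ with $\text{ord}_{p}\phi_{1}^{\natural}(u)\geq-m$ satisfies $\mathcal{N}\ll p^{\kappa_{0}}$.

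This is the heart of the matter, and it is where the hypotheses on $\phi_{2}$ and $\Omega$ are used. Comparing the two ways of expanding $\psi\bigl(u+p^{\kappa_{0}}(s+t)\bigr)$ via the transformation law at scale $\kappa_{0}$ forces the cocycle relation
\[
\phi_{1}(u+p^{\kappa_{0}}v)\equiv\phi_{1}(u)+\phi_{2}(u)\,p^{\kappa_{0}}v+\mathbf{M}_{p^{\Omega_{0}}},
\]
with $\Omega_{0}$ bounded below in terms of $\Omega(\kappa_{0})$ and $2\kappa_{0}-\lambda$ --- this is exactly what the hypothesis $\Omega(\kappa)\geq\min(2\kappa-\lambda+1,0)$ buys: it keeps the quadratic Taylor approximation of the phase accurate past the threshold $-m$. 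Since $\text{ord}_{p}\phi_{2}(u)=-\lambda$ and $\kappa_{0}\leq\lfloor\lambda/2\rfloor$, the quantity $\phi_{2}(u)p^{\kappa_{0}}$ has order $\kappa_{0}-\lambda\leq-m$; hence, inside any coset $u_{0}+p^{\kappa_{0}}\Z_{p}\subseteq T$, the condition $\text{ord}_{p}\phi_{1}^{\natural}(u_{0}+p^{\kappa_{0}}v)\geq-m$ forces the image of $\phi_{1}^{\natural}(u_{0})+\phi_{2}(u_{0})p^{\kappa_{0}}v$ in $\mathbb{Q}_{p}/p^{-m}\Z_{p}$ to vanish, which pins $v$ down to $O(1)$ residues modulo $p^{\,m-\kappa_{0}}$, the full range of $v$. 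Thus each of the at most $p^{\kappa_{0}}$ relevant cosets contributes $O(1)$ admissible residues, so $\mathcal{N}\ll p^{\kappa_{0}}$ and, combining the two displays,
\[
\sum_{u\bmod p^{n},\,u\in T}\psi(u)e(\omega u)\ll\psi_{0}\,p^{\,n-m}p^{\kappa_{0}}
\leq\psi_{0}\,p^{n}p^{\,\kappa_{0}-\lambda/2}
\leq\psi_{0}\,(p^{n}+p^{\upsilon}+p^{\lambda})\,p^{\min(\kappa_{1}-\lambda+\kappa_{0},\,0)},
\]
using $m=\lceil\lambda/2\rceil\geq\lambda/2$ and $\kappa_{1}=\max(\lambda/2,\kappa_{0})=\lambda/2$ in the regime at hand. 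The main obstacle I anticipate is precisely the error bookkeeping just invoked: one must check that the errors in the quadratic approximation of the phase --- the $\mathbf{M}$-terms together with the $v^{2}$-contribution $\tfrac12\phi_{2}(u_{0})p^{2\kappa_{0}}v^{2}$ --- are of strictly higher $p$-adic order than $-m$, so that they do not enlarge the per-coset count beyond $O(1)$; this, together with the (routine but fussy) handling of $e(\theta_{0}u)$ and of the degenerate ranges, is the only genuinely delicate point.
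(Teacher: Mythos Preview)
The paper does not prove this lemma but simply cites \cite[Theorem~5]{BM}, so your sketch is an attempt to reconstruct that argument rather than to match something in the paper. The overall plan---collapse the sum via orthogonality at the critical scale $m=\lceil\lambda/2\rceil$ and then count stationary points using a cocycle for $\phi_1$ coming from the second derivative---is correct and is indeed the strategy in \cite{BM}.

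There is, however, a genuine gap in your counting step. You invoke the cocycle at the \emph{fixed} scale $\kappa_0$; working it out (by comparing the expansions of $\psi(u_0+p^{\kappa_0}(v+t))$ at $t=1,2$) one finds only
\[
\phi_1^{\natural}(u_0+p^{\kappa_0}v)\equiv\phi_1^{\natural}(u_0)+\phi_2(u_0)\,p^{\kappa_0}v\pmod{p^{\,\kappa_0-\lambda+1}\mathbb{Z}_p}.
\]
For this error to lie ``strictly past the threshold $-m$'' you need $\kappa_0-\lambda+1>-m$, i.e.\ $\kappa_0\geq\lfloor\lambda/2\rfloor$; but the interesting regime is precisely $\kappa_0<\lambda/2$, and for small $\kappa_0$ the error $p^{\kappa_0-\lambda+1}$ swamps the target $p^{-m}$, so the stationary condition pins $v$ down only modulo $p$, not modulo $p^{m-\kappa_0}$. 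Your count then degenerates to $\mathcal{N}\ll p^{m-1}$ and the final bound to $\psi_0\,p^{n-1}$, losing the entire saving $p^{\kappa_0-\lambda/2}$. The repair is to run the cocycle at a \emph{variable} scale: if $u_1,u_2$ are two stationary points in the same $p^{\kappa_0}$-coset with $u_2-u_1\in p^{\kappa}\mathbb{Z}_p^{\times}$ for some $\kappa_0\leq\kappa<r:=\lfloor\lambda/2\rfloor$, apply the cocycle at that $\kappa$; then $\phi_2(u_1)p^{\kappa}w\in p^{\min(-m,\,\kappa-\lambda+1)}\mathbb{Z}_p$ with $w$ a unit, which is impossible since $\mathrm{ord}_p(\phi_2(u_1)p^{\kappa}w)=\kappa-\lambda<\min(-m,\kappa-\lambda+1)$. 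This Hensel-style lifting shows each $p^{\kappa_0}$-coset contains at most one stationary class modulo $p^{r}$, giving $\mathcal{N}\leq p^{\kappa_0}$ as claimed. (A second, smaller point: when $\lambda$ is odd, using only part~\eqref{MSP-claim1} of Lemma~\ref{statphase-lemma} at scale $m$ loses a factor $p^{1/2}$; one should instead use part~\eqref{MSP-claim2}, whose Gauss-sum factor supplies the missing square root.)
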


\begin{proof}
See~\cite[Theorem~5]{BM}.
\end{proof}

\subsection{Maass cusp forms for $\mathrm{GL}_2$}

We briefly review some basic facts of automorphic $L$-functions on $\rm \GL_2$.
Let $f$ be a Hecke-Maass cusp form for $\rm SL_2(\mathbb{Z})$
with Laplace eigenvalue $1/4+\mu_f^2$. Then $f$ has a Fourier expansion
$$
f(z)=\sqrt{y}\sum_{m\neq 0}\lambda_f(m)K_{i\mu_f}(2\pi |m|y)e(mx),
$$
where $K_{i\mu_f}$ is the modified Bessel function of the third kind.
We need the following average bound
\begin{align}\label{GL2: Rankin Selberg}
\sum_{m\leq N}|\lambda_f(m)|^2= \mu_{f} N+O\big(N^{3/5}\big).
\end{align}

A ubiquitous tool in the analysis of $L$-functions inside the critical strip
is the approximate functional equation (see \cite[\S 5.2]{IK}). 
This equation has various manifestations depending on context and purpose. 
For our purposes, the following lemma is convenient, 
which follows by applying a dyadic partition of unity to \cite[Theorem~5.3]{IK}.
\begin{lemma}\label{AFE}
Let $\chi$ be a primitive Dirichlet character modulo $q$. Then we have
\begin{equation*}
L(1/2,f\otimes g \otimes \chi)\ll_{f,g,\varepsilon} q^{\varepsilon}
\sup_{N\leq q^{2+\varepsilon}}\frac{|S(N)|}{\sqrt{N}}+q^{-A},
\end{equation*}
where
$$
S(N)=\sum_{m\geq1}\lambda_f(m)\lambda_g(m)\chi(m)
V\left(\frac{m}{N}\right)
$$
for some smooth function $V$ supported in $[1,2]$ and satisfying $V^{(j)}(x)\ll_j 1$.
\end{lemma}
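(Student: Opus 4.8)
The plan is to deduce the lemma from the general approximate functional equation applied to the degree-four $L$-function $L(s,f\otimes g\otimes\chi)$, followed by a dyadic subdivision and the removal of the $L(2s,\chi^2)$ factor. The only external ingredient is the standard analytic input: since $\chi$ is primitive of conductor $q=p^n$ with $n\geq 1$ and $f,g$ have level one, the function $L(s,f\otimes g\otimes\chi)$ is entire --- when $f=g$ it equals $L(s,\chi)L(s,\mathrm{sym}^2 f\otimes\chi)$, and when $f\neq g$ it equals $L(s,(f\otimes\chi)\times g)$ with $f\otimes\chi\not\cong\widetilde g$, so in either case there is no pole --- it has degree $4$, archimedean conductor $O_{f,g}(1)$, and arithmetic conductor $\asymp q^4$. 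Its Dirichlet coefficients are $a(n)=b(n)\chi(n)$ with $b(n)=\sum_{k^2\ell=n}\lambda_f(\ell)\lambda_g(\ell)$, and by Cauchy--Schwarz together with \eqref{GL2: Rankin Selberg} one has $\sum_{n\leq X}|b(n)|\ll X^{1+\varepsilon}$.

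First I would feed this into \cite[Theorem~5.3]{IK}. Writing $X\asymp q^2$ for the square root of the analytic conductor, this gives
$$
L(1/2,f\otimes g\otimes\chi)=\sum_{n\geq1}\frac{a(n)}{\sqrt n}\,V_1\Big(\frac nX\Big)+\eta\sum_{n\geq1}\frac{\overline{a(n)}}{\sqrt n}\,V_2\Big(\frac nX\Big),
$$
where $|\eta|=1$ and $V_1,V_2$ are smooth with $V_i^{(j)}(y)\ll_{j,A}(1+y)^{-A}$. Since the second sum has coefficients $\overline{a(n)}=b(n)\,\overline{\chi(n)}$ with $b(n)$ real, it is handled exactly like the first after complex conjugation, so I would focus on the first sum. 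Inserting a dyadic partition of unity $1=\sum_N W(n/N)$ ($W$ a fixed bump supported in $[1,2]$, $N$ over powers of $2$), the blocks with $N\gg q^{2+\varepsilon}$ contribute $\ll_A q^{-A}$ in total by the rapid decay of $V_1$ against $\sum_{n\leq X}|b(n)|\ll X^{1+\varepsilon}$, leaving $O(\log q)$ blocks with $N\leq q^{2+\varepsilon}$. In each, extracting $\sqrt N$ leaves the weight $y^{-1/2}V_1(Ny/X)W(y)$; for $N\leq X$ this has all derivatives $\ll_j1$, and for $X<N\leq q^{2+\varepsilon}$ one first pulls out the scalar $(N/X)^{-A_0}\leq1$, after which the remaining weight again has derivatives $\ll_j1$ since $z^m V_1^{(j)}(z)$ is bounded for all $m,j$. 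Hence
$$
L(1/2,f\otimes g\otimes\chi)\ll q^{\varepsilon}\sup_{N\leq q^{2+\varepsilon}}\frac{|T(N)|}{\sqrt N}+q^{-A},\qquad T(N):=\sum_{n\geq1}b(n)\chi(n)\,V_N\Big(\frac nN\Big),
$$
with each $V_N$ smooth, supported in $[1,2]$, and satisfying $V_N^{(j)}\ll_j1$.

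Finally, to pass from $T(N)$ to $S(N)$ I would substitute $b(n)=\sum_{k^2\ell=n}\lambda_f(\ell)\lambda_g(\ell)$ and use $\chi(k^2\ell)=\chi(k)^2\chi(\ell)$; the support condition forces $k\leq\sqrt{2N}$, and for each such $k$ the resulting sum over $\ell$ is exactly an instance of $S(N/k^2)$ in the sense of the lemma (with test function $V_N$ and $\chi$ as given). Therefore
$$
|T(N)|\leq\sum_{k\leq\sqrt{2N}}\big|S(N/k^2)\big|\leq\sqrt N\sum_{k\leq\sqrt{2N}}\frac1k\cdot\sup_{M\leq N}\frac{|S(M)|}{\sqrt M}\ll\sqrt N\,q^{\varepsilon}\sup_{M\leq q^{2+\varepsilon}}\frac{|S(M)|}{\sqrt M},
$$
and combining this with the previous display yields the lemma. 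I do not expect any real obstacle here --- the argument is routine --- and the only points that require a little care are verifying that $L(s,f\otimes g\otimes\chi)$ is entire with conductor $\asymp q^4$ (so that \cite[Theorem~5.3]{IK} supplies a smooth sum of length $q^{2+\varepsilon}$) and the bookkeeping needed to check that the test functions produced by the dyadic cut and by the rescaling $\ell\mapsto\ell/k^2$ remain admissible; the excluded case $p=2$ and the dual sum are symmetric.
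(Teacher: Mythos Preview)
Your proposal is correct and follows the same approach that the paper indicates: apply \cite[Theorem~5.3]{IK} and a dyadic partition of unity. The paper states only this much and gives no further details; your write-up is in fact more complete, since you make explicit the step of unwinding the factor $L(2s,\chi^2)$ via $b(n)=\sum_{k^2\ell=n}\lambda_f(\ell)\lambda_g(\ell)$ to pass from the full Dirichlet coefficients to the sums $S(N)$, a point the paper leaves implicit.
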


Now we turn to the Voronoi summation formula for $\rm SL_2(\mathbb{Z})$ 
(see \cite[(1.12) and (1.15)]{MS} and \cite[Theorem A.4]{KMV}).  
Define
\begin{align}\label{Gamma1}
\gamma_f^{\pm} (s)=\frac{1}{2\pi^{1+2s}}\left\{\prod_{\pm}
\frac{\Gamma\left(\frac{1+s\pm\mu_f}{2}\right)}
{\Gamma\left(\frac{-s\pm\mu_f}{2}\right)}\mp
 \prod_{\pm}\frac{\Gamma\left(\frac{2+s\pm\mu_f}{2}\right)}
{\Gamma\left(\frac{1-s\pm\mu_f}{2}\right)}\right\}.
\end{align}
Then we have the following Voronoi summation formula.
\begin{lemma}\label{voronoiGL2-Maass}
For $h(x)\in C_c^\infty(0,\infty)$, we denote by $\widetilde{h}(s)$ the Mellin transform of $h(x)$.
Let $a, \overline{a}, c \in \mathbb{Z}$ with $c\neq 0$, $(a,c)=1$ 
and $a \overline{a} \equiv 1\bmod c$. For $N>0$, we have
\begin{align*}
\sum_{m \geq 1} \lambda_f(m)e\left(\frac{am}{c}\right)h\left(\frac{m}{N}\right)
=\frac{N}{c} \sum_{\pm}\sum_{m \geq 1} \lambda_f(m)
e\left(\mp\frac{\overline{a}m}{c}\right)\Psi_h^{\pm}\left(\frac{mN}{c^2}\right),
\end{align*}
where for $\sigma>-1$,
\begin{align}\label{intgeral transform-1}
\Psi_h^{\pm}(x)
&=\frac{1}{2\pi i}\int_{(\sigma)}x^{-s}
\gamma_f^{\pm}(s)\widetilde{h}(-s)\mathrm{d}s \\
&\label{intgeral transform-2}
= \int_0^\infty h(y) \mathcal{J}_f^{\pm}(4\pi\sqrt{xy})\dd y,
\end{align}
with
\begin{align*}
\mathcal{J}_f^+(x)=\frac{-\pi}{\sin(\pi i \mu_f)} \left(J_{2i \mu_f}(x)
- J_{-2i \mu_f}(x)\right) ,
\end{align*}
and
\begin{align*}
\mathcal{J}_f^-(x)=4\varepsilon_f \cosh(\pi \mu_f) K_{2i \mu_f}(x).
\end{align*}
\end{lemma}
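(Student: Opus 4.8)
The plan is to deduce the formula from the analytic continuation and functional equation of the additively twisted $L$-function
$$
D_f(s,a/c)=\sum_{m\geq 1}\lambda_f(m)e\!\left(\frac{am}{c}\right)m^{-s},
$$
which converges absolutely for $\mathrm{Re}(s)>1$. First I would insert the Mellin representation $h(m/N)=\frac{1}{2\pi i}\int_{(\sigma)}\widetilde{h}(s)(m/N)^{-s}\,ds$ with $\sigma$ large, so that the left-hand side of the formula becomes $\frac{1}{2\pi i}\int_{(\sigma)}\widetilde{h}(s)\,N^{s}\,D_f(s,a/c)\,ds$. The entire statement then reduces to the functional equation $D_f(s,a/c)=(\text{Gamma factors})\cdot(\text{power of }c)\cdot\sum_{\pm}D_f(1-s,\mp\overline{a}/c)$, with Gamma factors matching the two pieces of $\gamma_f^{\pm}$ in~\eqref{Gamma1}: after substituting it, moving the contour, and interpreting the resulting integral in the variable $mN/c^2$, one reads off both the prefactor $N/c$ and the dual sum with the transform $\Psi_h^{\pm}$ in the form~\eqref{intgeral transform-1}.

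For the functional equation I would use the $K$-Bessel Fourier expansion of $f$. Taking a Mellin transform of $f$ in the vertical direction along the line $\mathrm{Re}(z)=a/c$ expresses $(\text{Gamma factor})\cdot D_f(s,a/c)$ — the Gamma factor being the Mellin transform of $y\mapsto\sqrt{y}\,K_{i\mu_f}(2\pi y)$ — as an integral of $f$ along that line. Applying the automorphy of $f$ under a matrix $\left(\begin{smallmatrix}a&b\\c&d\end{smallmatrix}\right)\in\mathrm{SL}_2(\mathbb{Z})$, which sends the cusp $\infty$ to $a/c$ and satisfies $d\equiv\overline{a}\bmod c$, converts this into the corresponding integral at the cusp $-d/c$, and hence into $D_f(1-s,-\overline{a}/c)$; keeping track of the contributions of $e(mx)$ and $e(-mx)$ in the expansion (equivalently the even and odd parts of $f$) produces the two summands indexed by $\pm$ together with the precise ratios of Gamma functions in~\eqref{Gamma1}. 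Since $f$ is a cusp form of level one, $D_f(s,a/c)$ is entire, so the contour shift from $\mathrm{Re}(s)=\sigma$ to $\mathrm{Re}(s)=-\sigma$ crosses no poles, and unfolding the sum over $m$ yields~\eqref{intgeral transform-1}.

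It remains to pass from~\eqref{intgeral transform-1} to the Bessel form~\eqref{intgeral transform-2}. Here I would invoke the classical Mellin--Barnes representations of the Bessel functions: $\int_0^\infty J_{\pm 2i\mu_f}(4\pi\sqrt{xy})\,y^{s-1}\,dy$ and $\int_0^\infty K_{2i\mu_f}(4\pi\sqrt{xy})\,y^{s-1}\,dy$ evaluate to explicit quotients of Gamma functions which, upon forming the combinations defining $\mathcal{J}_f^{+}$ and $\mathcal{J}_f^{-}$, reproduce $x^{-s}\gamma_f^{\pm}(s)$ up to the normalization in~\eqref{Gamma1}; Mellin inversion in $y$ against $h$ then identifies $\Psi_h^{\pm}(x)$ with $\int_0^\infty h(y)\,\mathcal{J}_f^{\pm}(4\pi\sqrt{xy})\,dy$, the interchange of integrals being justified by the rapid decay of $\widetilde{h}$ in vertical strips together with $h\in C_c^\infty(0,\infty)$. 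I expect the main obstacle to be the bookkeeping in the functional equation: isolating the two sign-components with the exact Gamma-factor ratios of~\eqref{Gamma1} — rather than merely up to constants — requires care with the action $z\mapsto\left(\begin{smallmatrix}a&b\\c&d\end{smallmatrix}\right)z$ near the cusp and with the parity of $f$. This is precisely the computation carried out in~\cite[Theorem~A.4]{KMV} and~\cite[(1.12),~(1.15)]{MS}, which I would follow.
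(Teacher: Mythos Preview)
Your proposal is correct and is exactly the standard route to the Voronoi formula, but note that the paper does not give its own proof of this lemma at all: it simply states the formula and cites \cite[(1.12),~(1.15)]{MS} and \cite[Theorem~A.4]{KMV}. Since your sketch explicitly follows those same references, there is nothing to compare --- you have supplied the argument the paper chose to outsource.
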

The function $\Psi_h^{\pm}(x)$ has the following asymptotic expansion when  $x\gg 1$.
\begin{lemma}\label{voronoiGL2-Maass-asymptotic}
For $x\gg 1$, we have
\begin{align*}
\Psi_h^{+}(x)=x^{-1/4} \int_0^\infty h(y)y^{-1/4}
\sum_{j=0}^{J}
\frac{c_{j} e(2 \sqrt{xy})+d_{j} e(-2 \sqrt{xy})}
{(xy)^{j/2}}\mathrm{d}y
+O_{t_f,J}\left(x^{-J/2-3/4}\right),
\end{align*}
for some any fixed integer $J \geq 1$ and
\begin{align*}
\Psi_h^-(x)\ll_{t_f,A}x^{-A}.
\end{align*}
\end{lemma}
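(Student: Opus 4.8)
The plan is to substitute the classical large-argument asymptotics of the Bessel functions appearing in $\mathcal{J}_f^{\pm}$ directly into the integral representation~\eqref{intgeral transform-2}. The crucial point enabling this is that $h\in C_c^\infty(0,\infty)$, so $\supp h\subseteq[a,b]$ for some $a>0$, and hence the Bessel argument satisfies $4\pi\sqrt{xy}\geq 4\pi\sqrt{ax}\to\infty$ uniformly for $y\in\supp h$ as $x\to\infty$. Granting this, the bound for $\Psi_h^-$ is immediate: since $\mathcal{J}_f^-(w)=4\varepsilon_f\cosh(\pi\mu_f)K_{2i\mu_f}(w)$ and $K_{2i\mu_f}(w)\ll_{\mu_f}e^{-w}$ for $w\geq1$, we get $\mathcal{J}_f^-(4\pi\sqrt{xy})\ll_{\mu_f}e^{-4\pi\sqrt{ax}}$ on $\supp h$, and integrating against $h$ over its (fixed, compact) support gives $\Psi_h^-(x)\ll_{\mu_f}e^{-4\pi\sqrt{ax}}\ll_{\mu_f,A}x^{-A}$ for every $A>0$.

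For $\Psi_h^+$ I would start from Hankel's asymptotic expansion: for any fixed $\nu$ and any $K\geq1$ there are constants $b_k^{\pm}(\nu)$ (depending only on $\nu$ and $k$) with
\begin{equation*}
J_\nu(w)=\sum_{k=0}^{K-1}\frac{b_k^+(\nu)e^{iw}+b_k^-(\nu)e^{-iw}}{w^{k+1/2}}+O_{\nu,K}\big(w^{-K-1/2}\big)\qquad(w\geq1).
\end{equation*}
Applying this with $\nu=\pm 2i\mu_f$ and forming the combination $\frac{-\pi}{\sin(\pi i\mu_f)}\big(J_{2i\mu_f}-J_{-2i\mu_f}\big)$ that defines $\mathcal{J}_f^+$ yields, for $w\geq1$,
\begin{equation*}
\mathcal{J}_f^+(w)=\sum_{k=0}^{K-1}\frac{\beta_k^+ e^{iw}+\beta_k^- e^{-iw}}{w^{k+1/2}}+O_{\mu_f,K}\big(w^{-K-1/2}\big),
\end{equation*}
with $\beta_k^{\pm}=\beta_k^{\pm}(\mu_f)$ absorbing the constant prefactor. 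Substituting $w=4\pi\sqrt{xy}$, using $e^{iw}=e(2\sqrt{xy})$ and $w^{-k-1/2}=(4\pi)^{-k-1/2}x^{-1/4}y^{-1/4}(xy)^{-k/2}$, and integrating against $h(y)\,\mathrm{d}y$ over $\supp h$ gives
\begin{equation*}
\Psi_h^+(x)=x^{-1/4}\int_0^\infty h(y)y^{-1/4}\sum_{k=0}^{K-1}\frac{c_k e(2\sqrt{xy})+d_k e(-2\sqrt{xy})}{(xy)^{k/2}}\,\mathrm{d}y+O_{\mu_f,K}\Big(\int_0^\infty h(y)\,(4\pi\sqrt{xy})^{-K-1/2}\,\mathrm{d}y\Big),
\end{equation*}
where $c_k=(4\pi)^{-k-1/2}\beta_k^+$, $d_k=(4\pi)^{-k-1/2}\beta_k^-$; since $\supp h$ is compact and bounded away from $0$, the error is $\ll_{\mu_f,K,h}x^{-K/2-1/4}$. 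Taking $K=J+1$ produces exactly the claimed expansion with error $O_{t_f,J}(x^{-J/2-3/4})$.

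I do not expect a genuine obstacle here; the argument is essentially bookkeeping. The points demanding a little care are: (i) that Hankel's expansion applies uniformly over $y\in\supp h$, which is guaranteed since $h$ is compactly supported in $(0,\infty)$, forcing $4\pi\sqrt{xy}\gg_h\sqrt{x}\to\infty$; (ii) that the constants $b_k^{\pm}$, and hence $\beta_k^{\pm}$ and $c_k,d_k$, depend only on $\mu_f$ (equivalently on $f$) and the index $k$, so that they may be pulled through the $y$-integral; and (iii) that the Hankel remainder, after multiplication by the fixed bounded weight $h$ and integration over a fixed compact set, collapses to a clean power of $x$. As an alternative route one could shift the contour in the Mellin--Barnes representation~\eqref{intgeral transform-1} and apply Stirling's formula to the ratios of Gamma functions in $\gamma_f^+$ to read off the same expansion, but the Bessel route above is the most direct.
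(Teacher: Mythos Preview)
Your proposal is correct and follows the standard route: substitute Hankel's asymptotic expansion for $J_{\pm 2i\mu_f}$ and the exponential decay of $K_{2i\mu_f}$ into the integral representation~\eqref{intgeral transform-2}, exploiting that $h$ has compact support bounded away from $0$ so the Bessel argument $4\pi\sqrt{xy}$ is uniformly large. The paper itself does not give a proof but simply cites \cite[Lemma~3.4]{LS}; your argument is precisely the kind of direct Bessel-asymptotics computation that underlies that reference, so there is no meaningful difference in approach.
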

\begin{proof}
See \cite[Lemma 3.4]{LS}.
\end{proof}

\begin{remark}\label{JK remark}
Notice that the above Lemma are only valid for $x\gg 1$. So we also need
the facts which state that, for $x>0$, $k\geq 0$ and $\text{Re} (\nu) = 0$. One has see \cite[Lemma C.2]{KMV}
\begin{align*}
y^{k}J_\nu^{(k)}(x)\ll_{k,\nu}\frac{1}{(1+x)^{1/2}}, \qquad
y^{k}K_\nu^{(k)}(x)\ll_{k,\nu}\frac{e^{-x}(1+|\log x|)}{(1+x)^{1/2}}.
\end{align*}
\end{remark}

\section{The set-up}

We recall the set-up and our general assumptions that will be in force for the rest of the paper.
Let $\alpha \in \mathbb{Z}_p^{\times}$ be such that~\eqref{Postnikov formula} 
holds for the character $\chi$ modulo $q=p^n$ in Theorem~\ref{main-theorem1}. 

Now we start to prove Theorem~\ref{main-theorem1}. Recall that, Lemma~\ref{AFE}, 
we are considering $S(N)$ with $N\ll q^{2+\varepsilon}$. We shall establish the following bound.
\begin{proposition}\label{main prop}
Assume $n/2 \leq r < n$. Then we have 
\begin{equation*}
S(N)\ll N^{3/4+\varepsilon}\big(p^{n-3r/4}+p^{r/2}\big).
\end{equation*}  
\end{proposition}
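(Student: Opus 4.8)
The plan is to follow exactly the blueprint sketched in the introduction, keeping careful track of all the $p$-adic conductors. First I would open the sum $S(N)$ using the conductor-lowering mechanism of Munshi, writing
$$
S(N) = \frac{1}{p^r}\sum_{\ell,m\sim N,\ \ell\equiv m\bmod p^r}\lambda_g(\ell)\lambda_f(m)\chi(m)\,\delta\!\left(\frac{m-\ell}{p^r}\right),
$$
where $\delta$ is detected by the DFI circle method with $C=\sqrt{N/p^r}$. Expanding $\delta$ and the congruence $\ell\equiv m\bmod p^r$ by additive characters leads to the expression displayed in the introduction with modulus $cp^r$ (and $(c,p)=1$, $c\sim C$), together with an integral over the DFI weight function. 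I would then apply the $\mathrm{GL}_2$ Voronoi summation formula of Lemma~\ref{voronoiGL2-Maass} to the $m$-sum (conductor $cp^n$, dual length $\asymp C^2p^{2n}/N\asymp p^{2n-r}$) and to the $\ell$-sum (conductor $cp^r$, dual length $\asymp p^r$), using Lemma~\ref{voronoiGL2-Maass-asymptotic} to control the resulting Bessel-type integral transforms, and evaluate the $a$-sum modulo $cp^n$. The character in the $m$-aspect combines with the Kloosterman sum coming from Voronoi to produce, after separating the $c$-part from the $p$-part, a Ramanujan sum in $c$ and the arithmetic factor $G(m,\ell,c)=\frac1q\sum_{u\bmod p^n}\chi(u)S(u\overline c,m\overline c;p^n)S(u\overline c,\ell\overline c;p^r)$, with a congruence $\ell\equiv p^r\overline{p^{2n-r}}\,m\bmod c$ surviving from the $a$-sum. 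This puts us at the expression
$$
S(N)\approx \frac{N^{1+\varepsilon}}{Cp^{n+r}}\sum_{c\sim C}\sum_{m\sim p^{2n-r}}\lambda_f(m)\sum_{\substack{\ell\sim p^r\\ \ell\equiv p^r\overline{p^{2n-r}}m\bmod c}}\lambda_g(\ell)\,G(m,\ell,c),
$$
up to negligible error; I would also record the trivial bound $S(N)\ll N^{2+\varepsilon}$ from the Ramanujan conjecture on average via~\eqref{GL2: Rankin Selberg}, for use when the main bound is weak.

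Next I would apply Cauchy--Schwarz in the $(c,m)$ variables to remove $\lambda_f(m)$, bounding $\sum_{c,m}|\lambda_f(m)|^2\ll Cp^{2n-r+\varepsilon}$ by~\eqref{GL2: Rankin Selberg}, and reducing matters to
$$
\Theta:=\sum_{c\sim C}\sum_{m\sim p^{2n-r}}\Big|\sum_{\substack{\ell\sim p^r\\ \ell\equiv p^r\overline{p^{2n-r}}m\bmod c}}\lambda_g(\ell)\,G(m,\ell,c)\Big|^2.
$$
Opening the square introduces $\ell_1,\ell_2\sim p^r$ with $\ell_1\equiv\ell_2\bmod c$, and I would apply Poisson summation to the $m$-sum modulo $cp^n$ (the $m$-interval has length $p^{2n-r}$ while the modulus has size $Cp^n\asymp p^{3n/2}$, so only a short dual range survives). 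The frequency-zero term forces, via the congruence $\ell_i\equiv p^r\overline{p^{2n-r}}m\bmod c$, a relation $\ell_1\equiv\ell_2\bmod c$ that is automatically consistent; the key point is to evaluate the resulting complete character sum in $m$ modulo $p^n$, which is $\frac1{q^2}\sum_{m\bmod p^n}S(u_1\overline c,m\overline c;p^n)\overline{S(u_2\overline c,m\overline c;p^n)}$ type expression after summing over the $u_i$. This is where Lemma~\ref{statphase-lemma} (and Lemma~\ref{kloost-eval}) come in: the product of Kloosterman sums is evaluated by $p$-adic stationary phase, contributing a phase whose second derivative has a definite $p$-order, so that the second-derivative test of Lemma~\ref{Second derivative test} applies to whatever oscillatory sum in $m$ (or in the dual variable) remains. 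I expect the diagonal contribution $\ell_1=\ell_2$ to give a saving of $p^r/C\asymp p^{3r/2-n}$ and the off-diagonal $\ell_1\neq\ell_2$ to give $p^{2n-r}/(Cp^{r/2})\asymp p^{n-r}$ over the trivial bound for $\Theta$; balancing $N^{1/2}\cdot(Cp^{2n-r})^{1/2}\cdot\Theta^{1/2}/(Cp^{n+r})$ then yields $S(N)\ll N^{3/4+\varepsilon}(p^{n-3r/4}+p^{r/2})$ after inserting $C=\sqrt{N/p^r}$ and simplifying.

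The main obstacle, and the technical heart of the argument, is the evaluation and estimation of the character sum $G(m,\ell,c)$ and its second moment after Poisson. Concretely: (i) one must factor the modulus $p^n$-part cleanly, use Lemma~\ref{kloost-eval} to open the two Kloosterman sums $S(\cdot,\cdot;p^n)$ and $S(\cdot,\cdot;p^r)$, and combine this with the Postnikov expansion~\eqref{chi expansion} for $\chi(u)$ so that the $u$-sum becomes a complete exponential sum with an explicit $p$-adic-analytic phase; (ii) after opening the square and applying Poisson in $m$, one obtains a phase of the form $\theta$ of a sum of $p$-adic square roots $(\cdot)_{1/2}$ plus the $\chi$-phases, to which one applies Lemma~\ref{statphase-lemma}\eqref{MSP-claim2} — here~\eqref{square power series expansion} and~\eqref{ord-sqrt} are needed to compute the order of the second derivative $\phi_2$ correctly, and one must check the hypotheses $-2n\le\mu\le-2\kappa$ and the error-term condition $\Omega(\kappa)\ge\min(2\kappa-\lambda+1,0)$; (iii) finally Lemma~\ref{Second derivative test} is invoked with the right values of $\upsilon,\lambda$ to extract the extra saving $p^{-r/2}$ in the off-diagonal term. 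Getting the exponents in (i)--(iii) to line up — in particular ensuring the stationary-phase second derivative does not degenerate and that the leftover linear phase $e(\omega u)$ has the conductor one expects — is the delicate bookkeeping that the proof must carry out carefully; the degenerate cases (small $c$, or $\ell_1-\ell_2$ highly divisible by $p$) should be handled separately, either trivially or by the bound $S(N)\ll N^{2+\varepsilon}$.
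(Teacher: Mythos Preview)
Your plan is correct and follows essentially the same route as the paper: DFI delta method with the $p^r$ conductor-lowering trick, double Voronoi, Ramanujan-sum decomposition producing the congruence and the weight $G(m,\ell,c)$, Cauchy--Schwarz in $m$, Poisson in $m$, and then the evaluation of $G$ via Lemma~\ref{kloost-eval} and Lemma~\ref{statphase-lemma} followed by the $p$-adic second-derivative test Lemma~\ref{Second derivative test} on the resulting character sum. The only places where you should be a bit more careful are that the congruence after opening the Ramanujan sum is modulo a divisor $d\mid c$ (so Poisson is modulo $dp^n$, not $cp^n$), and that the zero-frequency contribution is governed not by $\ell_1\equiv\ell_2\bmod c$ but by the stronger condition $\ell_1\equiv\ell_2\bmod p^r$ coming from the $p$-part of the character sum (this is what makes the diagonal genuinely diagonal).
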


We take $r=\lfloor 4n/5\rfloor$, where $\lfloor x \rfloor$ denotes the largest integer which 
does not exceed $x$. By Proposition~\ref{main prop}, we have
$$
S(N)\ll N^{1/2+\varepsilon}q^{9/10}
$$
from which Theorem~\ref{main-theorem1} follows. 
The rest of the paper is devoted to prove Proposition~\ref{main prop}.

\subsection{Applying the delta method}

Define $\delta: \mathbb{Z}\rightarrow \{0,1\}$ with $\delta(0)=1$ and $\delta(m)=0$ for $m \neq 0$.
To separate oscillations from $\lambda_g(m)$ and $\lambda_f(m)\chi(m)$,
we will use a version of the circle method by Duke, Friedlander and Iwaniec (see \cite[Chapter 20]{IK}).
For any $m\in \mathbb{Z}$ and $C\in \mathbb{R}^+$, we have
\begin{align}\label{DFI's}
\delta(m=0)=\frac{1}{C}\sum_{1\leq c\leq C} \;\frac{1}{c}\;\sideset{}{^*}\sum_{a\bmod{c}}
e\left(\frac{ma}{c}\right)\int_\mathbb{R}g(c,\zeta) e\left(\frac{m\zeta}{cC}\right)\mathrm{d}\zeta,
\end{align}
where the $*$ on the sum indicates that the sum over $a$ is restricted to $(a,c)=1$.
The function $g$ has the following properties
(see (20.158) and (20.159) of~\cite{IK})
\begin{align}\label{g-h}
g(c,\zeta)\ll |\zeta|^{-A},\quad g(c,\zeta) =1+
O\left(\frac{C}{c}\left(\frac{c}{C}+|\zeta|\right)^A\right)
\end{align}
for any $A>1$ and
\begin{align}\label{g rapid decay}
\frac{\partial^j}{\partial \zeta^j}g(c,\zeta)\ll
|\zeta|^{-j}\min\left(|\zeta|^{-1},\frac{C}{c}\right)\log C, \quad j\geq 1.
\end{align}
In particular the first property in \eqref{g-h} implies that
the effective range of the integration in
\eqref{DFI's} is $[-C^\varepsilon, C^\varepsilon]$.

In order to use the delta symbol method in~\eqref{DFI's}, we proceed as \cite[(3.7)]{LMS}
(see \cite[Lemma 1]{S} and \cite[Lemma 3.2 ]{WZ} for more details) 
by writing $\delta(m=0)$ in a more analytic form:
\begin{align}\label{circle method}
\delta(m=0)=&\sum_{k=1}^{[\log C/\log p]}
\frac{1}{C}\sum_{\substack{1\leq c\leq C/p^k \\ (c,p)=1}}\frac{1}{cp^{r+k}}
\sideset{}{^*}\sum_{a\bmod cp^{r+k}}
e\left(\frac{am}{cp^{r+k}}\right)
\int_{\mathbb{R}} g(p^k c,\zeta)e\left(\frac{m\zeta}{cCp^{r+k}}\right)
 \mathrm{d}\zeta\notag\\
&+\sum_{\lambda=0}^{r}\frac{1}{C}\sum_{\substack{1\leq c\leq C \\ (c,p)=1}}
\frac{1}{cp^r}\;\sideset{}{^*}\sum_{a\bmod cp^{r-\lambda}}
e\left(\frac{am}{cp^{r-\lambda}}\right)\int_{\mathbb{R}} g(c,\zeta)e\left(\frac{m\zeta}{cCp^r}\right)\mathrm{d}\zeta
,
\end{align}
where $r \in \mathbb{Z}$.
Instead of using the entire modulus $q$ for the conductor-lowering mechanism, 
we only use a part $p^r$, where $r < n$ is optimally chosen later. 
This introduces more terms on the diagonal while having less impact on the off-diagonal.

Now we write
$$
S(N)=\sum_{\ell \geq 1}\lambda_g(\ell)W\left(\frac{\ell}{N}\right)
\sum_{\substack{m\geq 1 \\ p^r|m-\ell}}
\lambda_f(m)\chi(m)V\left(\frac{m}{N}\right)
\delta\left(\frac{m-\ell}{p^r}\right),
$$
where $W$ is a smooth function supported in $[1/2,5/2]$, $W(x)=1$
for $x\in [1,2]$ and $W^{(j)}(x)\ll_j 1$.
Applying~\eqref{circle method} with $ C=\sqrt{N/p^r}, $ we have
\begin{align*}
S(N)=\, & \sum_{m \geq 1}\sum_{\ell \geq 1}\lambda_g(\ell)W\left(\frac{\ell}{N}\right)
\lambda_f(m)\chi (m)V\left(\frac{m}{N}\right) \\
&\notag\times\bigg\{\sum_{\lambda=0}^{r}\frac{1}{C}\sum_{\substack{1\leq c\leq C \\ (c,p)=1}}
\frac{1}{cp^r}\;\sideset{}{^*}\sum_{a\bmod cp^{r-\lambda}}
e\left(\frac{a(m-\ell)}{cp^{r-\lambda}}\right)\int_{\mathbb{R}} g(c,\zeta)e\left(\frac{(m-\ell)\zeta}{cCp^r}\right)\mathrm{d}\zeta\\
&\notag+\sum_{k=1}^{[\log C/\log p]}
\frac{1}{C}\sum_{\substack{1\leq c\leq C \\ (c,p)=1}}\frac{1}{cp^{r+k}}
\sideset{}{^*}\sum_{a\bmod cp^{r+k}}e\left(\frac{a(m-\ell)}{cp^{r+k}}\right)
\int_{\mathbb{R}} g(p^k c,\zeta)e\left(\frac{(m-\ell)\zeta}{cCp^{r+k}}\right)
 \mathrm{d}\zeta\bigg\}.
\end{align*}
In the above sum, we only consider the first term in the braces with $\lambda = 0$, that is,
\begin{align*}
\widetilde{S}(N):= & \, \sum_{m \geq 1}\sum_{\ell \geq 1}
\lambda_g(\ell)W\Big(\frac{\ell}{N}\Big)
\lambda_f(m) \chi (m)  V\Big(\frac{m}{N}\Big)\\
&\times\frac{1}{C}\sum_{\substack{1\leq c\leq C \\ (c,p)=1}}
\frac{1}{cp^r}\;\sideset{}{^*}\sum_{a\bmod{cp^r}}
e\left(\frac{(m-\ell)a}{cp^r}\right)
\int_\mathbb{R}g(c,\zeta) e\left(\frac{(m-\ell)\zeta}
{cCp^r}\right)\mathrm{d}\zeta.
\end{align*}
\begin{remark}\label{remark1}
The other terms are lower-order terms, which can be treated similarly. 
The same method applies to the remaining sums and 
yields better bounds due to their shorter lengths 
(cf.~\cite[Section 3]{HX} and~\cite[Section 3]{WZ}).
\end{remark}
Interchanging the order of integration and summations, we get
\begin{align}\label{beforeVoronoi}
\widetilde{S}(N)=\frac{1}{C}\sum_{\substack{1\leq c\leq C \\ (c,p)=1}}
\frac{1}{cp^r}\;\sideset{}{^*}\sum_{a\bmod cp^r}
\int_\mathbb{R}g(c,\zeta)S_g(N, a, c, \zeta)S_f(N, a, c, \zeta)\mathrm{d}\zeta,
\end{align}
where
\begin{align}\label{l-sum}
S_g(N, a, c, \zeta):=\sum_{\ell\geq1}\lambda_g(\ell)e\left(-\frac{a\ell}{cp^r}\right)
W\left(\frac{\ell}{N}\right)e\left(-\frac{\ell\zeta}{cCp^r}\right),
\end{align}
and
\begin{align}\label{m-sum}
S_f(N, a, c, \zeta):=\sum_{m\geq1}\lambda_f(m)\chi(m)e\left(\frac{am}{cp^r}\right)
 V\left(\frac{m}{N}\right)e\left(\frac{m\zeta}{cCp^r}\right).
\end{align}

\subsection{Applying Voronoi summation formulas}

In this subsection, we shall apply Voronoi summation formulas
to the $\ell$-and $m$-sums in \eqref{beforeVoronoi}.

We first consider the sum over $\ell$. We apply
Lemma~\ref{voronoiGL2-Maass} with
$$h_1(y):=W(y)e\left(-\frac{ yN\zeta}{cCp^r}\right)$$ to transform the
$\ell$-sum in~\eqref{l-sum} into
\begin{align}\label{$n$-sum after GL2 Voronoi}
S_g(N, a, c, \zeta)=\frac{N}{cp^r}\sum_{\pm}\sum_{\ell \geq 1}\lambda_g(\ell)
e\left(\pm\frac{\ell \overline{a}}{cp^r}\right)
\Psi_{h_1}^{\pm}\left(\frac{\ell N}{c^2p^{2r}}\right),
\end{align}
where by~\eqref{intgeral transform-2},
$$\Psi_{h_1}^{\pm}(x)= \int_0^\infty W(y)e\left(-\frac{ yN\zeta}{cCp^r}\right)
 \mathcal{J}_f^{\pm}(4\pi\sqrt{xy})\dd y.$$
For $x\gg N^{\varepsilon}$, by Lemma~\ref{voronoiGL2-Maass-asymptotic},
we have $\Psi_{h_1}^-(x)\ll N^{-A}$ and the evaluation of 
$\Psi_{h_1}^{+}(x)$ is reduced to considering the integral
\begin{align*}
\Psi_{0}(x, \zeta, c)=x^{-1/4} \int_0^\infty y^{-1/4} W(y)
e\left(\frac{\zeta N y}{cCp^r}\pm 2 \sqrt{xy}\right)\mathrm{d}y.
\end{align*}
Let $$\varrho(y)=\frac{\zeta N y}{cCp^r}\pm 2 \sqrt{xy}.$$ Then we have
$$
\varrho'(y)=\frac{\zeta N }{cCp^r}\pm \sqrt{\frac{x}{y}}, \qquad
\varrho''(y)=\mp \frac{1}{2y}\sqrt{\frac{x}{y}}.
$$
Then repeated integration by parts shows that
the above integral is negligibly small unless we take the $-$ sign and 
$x\asymp (\zeta N)^2/(cCp^r)^2$.
More precisely, by a stationary phase argument,
\begin{align*}
\Psi_0(x,\zeta,c)=x^{-1/2}W_{\natural}\left(\frac{c^2C^2p^{2r}x}{\zeta^2N^2}\right)
e\left(-\frac{cCp^r x}{\zeta N}\right)+O_A(N^{-A})
\end{align*}
for any $A>0$.
Then for $x=\ell N/c^2p^{2r}$, one sees that
the $\ell$-sums in~\eqref{$n$-sum after GL2 Voronoi} can be truncated at
$\ell\leq N^{1+\varepsilon}/C^2 \asymp p^{r+\varepsilon}$, at the cost of a negligible error.
On the other hand, for $x\ll N^{\varepsilon}$, the above restriction on $\ell$ holds trivially.
So in any case, up to a negligible error, we can restricted the sum over $\ell$ to
$\ell \leq p^{r+\varepsilon}$.
Then we arrive at
\begin{equation*}
S_g(N, a, c, \zeta)=N^{1/2}\sum_{\pm}\sum_{\ell \leq p^{r+\varepsilon}}
\frac{\lambda_g(\ell)}{\ell^{1/2}}
e\left(\pm \frac{\ell \overline{a}}{cp^r}\right)
\mathfrak{J}_g^{\pm}(\ell, c, \zeta)+O(N^{-A}),
\end{equation*}
where for $\ell N/c^2p^{2r}\gg N^{\varepsilon}$,
\begin{align}\label{l large}
\mathfrak{J}_g^{\pm}(\ell, c, \zeta)=\left(\frac{\ell N}{c^{2}p^{2r}}\right)^{1/4} \int_0^\infty y^{-1/4}W(y)
e\left(-\frac{\zeta N y}{cCp^r}\pm 2 \frac{\sqrt{\ell Ny}}{cp^r}\right)\mathrm{d}y,
\end{align}
and for $\ell N/c^2p^{2r} \ll N^{\varepsilon}$,
\begin{align}\label{l small}
\mathfrak{J}_g^{\pm}(\ell, c, \zeta)=\frac{\sqrt{\ell N}}{cp^r}
\int_0^{\infty}W(y)e\left(-\frac{\zeta N y}{cCp^r}\right)
\mathcal{J}_g^{\pm}\left(\frac{4\pi\sqrt{\ell Ny}}{cp^r}\right)\mathrm{d}y.
\end{align}

Then we consider the sum over $m$. 
To prepare for the application of the Voronoi summation formula to the $m$-sum in~\eqref{m-sum},
we use the Fourier expansion of $\chi$ in terms of additive characters (see (3.12) in~\cite{IK}),
$$\chi(m)=
\frac{1}{\tau(\overline{\chi})}
\sum_{b \bmod p^n}\overline{\chi}(b)
e\left(\frac{mb}{p^n}\right),$$
one has
\begin{equation*}
S_f(N, a, c, \zeta)=\frac{1}{\tau(\overline{\chi})}\sum_{b \bmod p^n}
\overline{\chi}(b)\sum_{m\geq1}\lambda_f(m)
e\left(\frac{ap^{n-r}+bc}{cp^n}m\right)V\left(\frac{m}{N}\right)
e\left(\frac{m\zeta}{cCp^r}\right).
\end{equation*}
Note that $(ap^{n-r}+bc,c)=1$ and $(ap^{n-r}+bc,p)=(bc,p)=1$. 
Thus $(ap^{n-r}+bc, cp^n)=1$. 
Then we apply Lemma~\ref{voronoiGL2-Maass} with
$$h_2(y):=V(y)e\left(\frac{yN\zeta}{cCp^r}\right)$$ to transform the
$m$-sum in~\eqref{m-sum} into
\begin{equation*}\label{$m$-sum after GL2 Voronoi}
S_f(N, a, c, \zeta)=\frac{N}{\tau(\overline{\chi})cp^n}\sum_{\pm}\sum_{b \bmod p^n}
\overline{\chi}(b)\sum_{m \geq 1}\lambda_f(m)
e\left(\mp\frac{\overline{ap^{n -r}+bc}}{cp^n}m\right)
\Psi_{h_2}^{\pm}\left(\frac{mN}{c^2p^{2n}}\right),
\end{equation*}
where by~\eqref{intgeral transform-1},
\begin{equation*}
\Psi_{h_2}^{\pm}\left(\frac{mN}{c^2p^{2n}}\right)
=\frac{1}{2\pi i}\int_{(\sigma)}\left(\frac{mN}{c^2p^{2n}}\right)^{-s}
\gamma_f^{\pm}(s)
V^{\dag}\left(\frac{N\zeta}{cCp^r},-s\right)\mathrm{d}s
\end{equation*}
with
\begin{equation*}
V^{\dag}(\xi,s) =\int_{0}^{\infty}V(y)e(\xi y)y^{s-1} \mathrm{d} y.
\end{equation*}
By Stirling's formula, for $\sigma\geq -1/2$,
\begin{equation}\label{A bound}
\gamma_f^{\pm}(\sigma+i\tau)\ll_{f,\sigma}(1+|\tau|)^{2\left(\sigma+1/2\right)}.
\end{equation}
By \cite[Lemma 5]{Mun1}, we have
\begin{equation*}
\begin{split}
V^{\dag}\left(\frac{N\zeta}{cCp^r},-s\right)\ll_j&
\min \left\{ \left(\frac{1+|\mathrm{Im}(s)|}{N|\zeta|/cCp^r} \right)^j ,
\left(\frac{1+N|\zeta|/cCp^r}{|\mathrm{Im}(s)|} \right)^j \right\}\\
\ll_j& \min \left\{1, \left(\frac{N^{1+\varepsilon}}{cCp^r|\mathrm{Im}(s)|} \right)^j \right\}.
\end{split}
\end{equation*}
This together with~\eqref{A bound} implies that
\begin{align*}
\Psi_{h_2}^{\pm}\left(\frac{mN}{c^2p^{2n}}\right)
&\ll_j \left(\frac{mN}{c^2p^{2n}}\right)^{-\sigma}
\int_{\mathbb{R}}
(1+|\tau|)^{2(\sigma+1/2)}\min\left\{1, \left(\frac{N^{1+\varepsilon}}
{cCp^r|\mathrm{Im}(s)|} \right)^j \right\}\mathrm{d}\tau\\
&\ll\left(\frac{N^{1+\varepsilon}}{cCp^r}\right)^{2}\left(\frac{p^{2r}C^2m}
{N^{1+\varepsilon}p^{2n}}\right)^{-\sigma},
\end{align*}
by choosing $j=2\sigma+2$ and noting that $|\zeta|\leq N^{\varepsilon}$.
Then taking $\sigma$ sufficiently large, we see the
$m$-sum in~\eqref{$n$-sum after GL2 Voronoi} can be truncated at
$m\leq N^{1+\varepsilon}p^{2(n-r)}/C^2 \asymp p^{2n-r+\varepsilon}$.
After this truncation, we shift the contour to $\sigma=-1/2$ to obtain
\begin{align*}
S_f(N, a, c, \zeta)=\frac{N^{1/2}}{\tau(\overline{\chi})}&\sum_{\pm}\sum_{b \bmod p^n}
\overline{\chi}(b)\sum_{m\leq p^{2n-r+\varepsilon}}\frac{\lambda_f(m)}{m^{1/2}}\\
&\times e\left(\mp\frac{\overline{ap^{n-r}+bc}}{cp^n}m\right)
\mathfrak{J}_f^{\pm}(m, c, \zeta)+O(N^{-A}),
\end{align*}
where
\begin{equation}\label{integral J}
\mathfrak{J}_f^{\pm}(m, c, \zeta)
=\frac{1}{2\pi i}\int_{(\sigma)}\left(\frac{Nm}{c^2p^{2n}}\right)^{-i\tau}
\gamma_f^{\pm}\left(-\frac{1}{2}+i\tau\right)
V^{\dag}\left(\frac{N\zeta}{cCp^r},\frac{1}{2}-i\tau\right)\mathrm{d}\tau.
\end{equation}
By assembling the above results and interchanging the order of summation and integration, we obtain
\begin{align}\label{main case}
\widetilde{S}(N)=\;&\sum_{\pm}\sum_{\pm}\frac{N}{Cp^r}
\sum_{\substack{1\leq c\leq C \\ (c, p)=1}}\frac{1}{c}
\sum_{m\leq p^{2n-r+\varepsilon}}\frac{\lambda_f(m)}{m^{1/2}}
\sum_{\ell\leq p^{r+\epsilon}}\frac{\lambda_g(\ell)}{\ell^{1/2}}\nonumber\\
&\times H^{\pm, \pm}(m, \ell, c)K(\mp m, \mp \ell, c)+O_A\left(N^{-A}\right),
\end{align}
where
\begin{equation}\label{integral JJ}
H^{\pm, \pm}(m, \ell, c)=\int_{\mathbb{R}}g(c,\zeta)\mathfrak{J}_f^{\pm}(m, c, \zeta)
\mathfrak{J}_g^{\pm}(\ell, c, \zeta)\mathrm{d}\zeta,
\end{equation}
and
\begin{align*}
K(m,\ell,c)=
\frac{1}{\tau(\overline{\chi})}\;\sum_{b \bmod p^n}\overline{\chi}(b)
\;\sideset{}{^*}\sum_{a\bmod{cp^r}}
e\left(\frac{m\overline{ap^{n-r}+bc}}{cp^n}\right)e\left(-\frac{\ell\overline{a}}{cp^r}\right).
\end{align*}

Before further analysis, we make a computation of the character sums 
$K(m,\ell,c)$. Using the well-known reciprocity formula, we obtain
$$
e\left(\frac{m\overline{(ap^{n-r}+bc)}}{cp^n}\right)=
e\left(\frac{m\overline{c}\overline{(ap^{n-r}+bc)}}{p^n}\right)
e\left(\frac{m\overline{ap^{2n-r}}}{c}\right)
$$
and
$$
e\left(\frac{\ell\overline{a}}{cp^r}\right)=
e\left(\frac{\overline{c}\ell\overline{a}}{p^r}\right)
e\left(\frac{\overline{p^r}\ell\overline{a}}{c}\right).
$$
Therefore, $K(m, \ell, c)$ splits into a product of two 
sums of $c$ and $p^r$ respectively. More precisely, we get
\begin{align}\label{K sum}
K(m,\ell,c)=S\big(0, m\overline{p^{2n-r}}-\ell\overline{p^r}; c\big)G(m,\ell,c),
\end{align}
where $S(0,u;c)$ is the Ramanujan sum and
\begin{align*} 
G(m,\ell,c)=\frac{1}{\tau(\overline{\chi})}\sum_{b \bmod p^n} \overline{\chi}(b)
\;\sideset{}{^*}\sum_{a\bmod{p^r}}e\left(\frac{m\overline{c}\overline{(ap^{n-r}+bc)}}{p^r}\right)
e\left(-\frac{\overline{c}\ell\overline{a}}{p^r}\right).
\end{align*}
Plugging~\eqref{K sum} into~\eqref{main case} and further breaking the
$\ell, m$-sums into dyadic segments in $\widetilde{S}(N)$, we obtain
$$
\widetilde{S}(N)\ll \frac{N^{1/2+\varepsilon}}{p^{r/2}}
\sup_{1\ll M\ll p^{2n-r+\varepsilon} }
\sup_{1\ll L\ll p^{r+\varepsilon}}
\sum_{1\leq c\leq C\atop (c,p)=1}
\sum_{d|c}\frac{d}{c}|\widetilde{S}_d^{\pm, \pm }(N,L,M)|,
$$
where
\begin{equation}\label{Sd}
\widetilde{S}_d^{\pm, \pm }(N,L,M)=\sum_{m\sim M}\frac{\lambda_f(m)}{m^{1/2}}
\sum_{\substack{\ell\sim L \\ \ell \equiv mp^{\lambda}\overline{p^{2\kappa-r}}\bmod d}}
\frac{\lambda_g(\ell)}{\ell^{1/2}}H^{\pm, \pm}(m, \ell, c)G(\mp m, \mp \ell, c).
\end{equation}
Here we have used the following identity for the Ramanujan sum
\begin{align*}
S(0,u;c)=\sum_{d\mid (u,c)}d\mu\left(\frac{c}{d}\right).
\end{align*} 

\section{Cauchy and Poisson}

Applying the Cauchy--Schwartz inequality to $m$-sum in \eqref{Sd} and using
the Rankin--Selberg estimate~\eqref{GL2: Rankin Selberg}, one sees that
$$
\widetilde{S}_d^{\pm, \pm}(N, L, M) \ll \mathcal{L}(L,M)^{1/2},
$$
where
\begin{align}\label{L-sum}
\mathcal{L}(L,M)=\sum\limits_{m} U\left(\frac{m}{M}\right)
\Bigg|\sum_{\substack{\ell\sim L \\ \ell \equiv mp^r\overline{p^{2n-r}}\bmod d}}
\frac{\lambda_g(\ell)}{\ell^{1/2}}H^{\pm, \pm}(m, \ell, c)G(\mp m, \mp \ell, c)
\Bigg|^2,
\end{align}
with $U$ is a smooth function supported in $[1/2,5/2]$, $U(x)=1$
for $x\in [1,2]$ and $U^{(j)}(x)\ll_j 1$.
Opening the square and switching the order of summations in~\eqref{L-sum}, we get
\begin{align}\label{L estimates}
\mathcal{L}(L,M)=
\sum_{\ell_1\sim L}\frac{\lambda_g(\ell_1)}{\ell_1^{1/2}}
\sum_{\substack{\ell_2\sim L \\ \ell_2\equiv \ell_1\bmod d}}
\frac{\overline{\lambda_g(\ell_2)}}{\ell_2^{1/2}}\mathcal{T}(L,M).
\end{align}
where
\begin{align*}
\mathcal{T}(L,M)=&
\sum_{\substack{m\in \mathbb{Z}\\ m\equiv \ell_1p^{2n-r}
\overline{p^r}\bmod d}}U\left(\frac{m}{M}\right)
H^{\pm, \pm}(m, \ell_1, c)\overline{H^{\pm, \pm}(m, \ell_2, c)}\\
&\times 
G(\mp m, \mp \ell_1, c)\overline{G(\mp m, \mp \ell_2, c)}.
\end{align*}
We break the $m$-sum into congruence classes modulo $dp^n$ and apply
the Poisson summation formula to the sum over $m$. 
It is therefore sufficient to consider the following sum
\begin{equation*}
\mathcal{T}(L,M)=\frac{M}{dp^n}
\sum_{\tilde{m}\in \mathbb{Z}}|\mathcal{K}(\tilde{m})||\mathcal{I}(\tilde{m})|,
\end{equation*}
where the character sum $\mathcal{K}(\tilde{m})
:=\mathcal{K}(\tilde{m}; \mp \ell_1, \mp \ell_2,c)$ 
is given by
\begin{equation}\label{character sums}
\mathcal{K}(\tilde{m})=
\sum_{\substack{u\bmod dp^n \\ u \equiv \ell_1p^{2n-r}
\overline{p^r}\bmod d}} G(\mp u, \mp \ell_1, c)
\overline{G(\mp u, \mp \ell_2, c)}
e\left(\frac{\tilde{m} u}{dp^n}\right),
\end{equation}
and the integral $\mathcal{I}(\tilde{m})=\mathcal{I}(\tilde{m}; \ell_1, \ell_2,c)$ is given by
\begin{equation}\label{integral-H}
 \mathcal{I}(\tilde{m})=\int_{\mathbb{R}}
U\left(\xi\right)H^{\pm, \pm}\left(M\xi,\ell_1,c\right)
\overline{H^{\pm, \pm}\left(M\xi,\ell_2,c\right)}\,
e\left(-\frac{\tilde{m} M \xi}{dp^n}\right)\mathrm{d}\xi.
\end{equation}

We need to analyze the integral $\mathcal{I}(\tilde{m})$ and the character sum $\mathcal{K}(\tilde{m})$. 
The integral $\mathcal{I}(\tilde{m})$ will be computed 
by the two dimensional (archimedean) second derivative test;
while the character sum $\mathcal{K}(\tilde{m})$ will be
evaluated by an involved non-archimedean stationary phase computation 
and the $p$-adic second derivative test.
\subsection{Estimation of the integral}
In this subsection, we have the following estimate for $\mathcal{I}(\tilde{m})$.
\begin{lemma}\label{integral:lemma}
Let $\mathcal{I}(\tilde{m})$ be defined as in \eqref{integral-H}. Then,
one has the following estimates.
\begin{enumerate}
\item
If $\tilde{m} \gg \frac{N^\varepsilon Cdp^n}{cM}$, we have $ \mathcal{I}(\tilde{m}) \ll N^{-A}$.

\item
If $\tilde{m} \ll \frac{N^\varepsilon Cdp^n}{cM}$, we have 
$$ \mathcal{I}(\tilde{m}) \ll  \frac{N^\varepsilon c}{C}.$$

\end{enumerate}
\end{lemma}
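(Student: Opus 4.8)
The plan is to unfold the factor $H^{\pm,\pm}$ in \eqref{integral-H} by means of \eqref{integral JJ}, so that
\begin{equation*}
\mathcal I(\tilde m)=\int_{\mathbb R}\int_{\mathbb R}\int_{\mathbb R}U(\xi)\,g(c,\zeta_1)\overline{g(c,\zeta_2)}\,\mathfrak{J}_f^{\pm}(M\xi,c,\zeta_1)\overline{\mathfrak{J}_f^{\pm}(M\xi,c,\zeta_2)}\,\mathfrak{J}_g^{\pm}(\ell_1,c,\zeta_1)\overline{\mathfrak{J}_g^{\pm}(\ell_2,c,\zeta_2)}\,e\!\left(-\frac{\tilde m M\xi}{dp^n}\right)\mathrm{d}\xi\,\mathrm{d}\zeta_1\,\mathrm{d}\zeta_2,
\end{equation*}
where, decisively, $\xi$ enters only through the two factors $\mathfrak{J}_f^{\pm}(M\xi,c,\zeta_i)$ and the linear exponential. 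Since $g(c,\cdot)$ is negligible for $|\zeta_i|>N^{\varepsilon}$, I would record at the outset the bound $\mathfrak{J}_g^{\pm}(\ell,c,\zeta)\ll N^{\varepsilon}$ on $|\zeta|\le N^{\varepsilon}$ (a routine consequence of Lemma~\ref{voronoiGL2-Maass-asymptotic} and Remark~\ref{JK remark}), together with the derivative estimate $\partial_\xi^{j}\mathfrak{J}_f^{\pm}(M\xi,c,\zeta)\ll_j (N^{\varepsilon}C/c)^{j+1}$ for $\xi\asymp1$, which follows from \eqref{integral J}: each $\partial_\xi$ produces a factor $-i\tau/\xi$, and the decay of $V^{\dag}$ together with $\gamma_f^{\pm}(-\tfrac12+i\tau)\ll1$ confines the effective $\tau$-range to $|\tau|\ll N^{\varepsilon}C/c$.

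For part (1) I would integrate by parts repeatedly in $\xi$. Apart from the exponential $e(-\tilde m M\xi/(dp^n))$, the $\xi$-integrand has $j$-th derivative $\ll(N^{\varepsilon}C/c)^{j+O(1)}$, while the phase derivative has size $\tilde m M/(dp^n)$. When $\tilde m\gg N^{\varepsilon}Cdp^n/(cM)$ the phase derivative dominates, so each integration by parts gains a factor $N^{\varepsilon}C/(c\cdot\tilde m M/(dp^n))$, a negative power of $N$; bounding the $\zeta_1,\zeta_2$ integrals trivially then yields $\mathcal I(\tilde m)\ll_A N^{-A}$.

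For part (2) the trivial estimate is only $N^{\varepsilon}$, so a further saving of $c/C$ must be extracted, and I would do this by exposing the oscillation of the transforms. Passing $\mathfrak{J}_f^{\pm}$ to the integral form \eqref{intgeral transform-2}, inserting the Bessel asymptotics of Lemma~\ref{voronoiGL2-Maass-asymptotic}, and running the stationary phase method in the resulting $y$-integral shows that, away from the range where the Bessel argument is $O(N^{\varepsilon})$ and up to a negligible error,
\[
\mathfrak{J}_f^{\pm}(m,c,\zeta)=A_f^{\pm}(m,c,\zeta)\,e\!\left(\mp\frac{\beta_f\,mC}{cp^{2n-r}\zeta}\right),
\]
where $\beta_f>0$ is a constant, $A_f^{\pm}\ll N^{\varepsilon}$ varies slowly in $m$, and $A_f^{\pm}$ is supported on the short interval $|\zeta|\asymp\sqrt m\,p^{r/2-n}$; analogously $\mathfrak{J}_g^{\pm}(\ell,c,\zeta)$ has an explicit phase linear in $1/\zeta$ and is supported on $|\zeta|\asymp\sqrt\ell\,p^{-r/2}$. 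Hence the integrand of $\mathcal I(\tilde m)$ is negligible unless these $\zeta$-supports overlap, i.e.\ unless $M\asymp Lp^{2n-2r}$, in which case $\zeta_1,\zeta_2$ are confined to an interval of length $\Delta\asymp\sqrt M\,p^{r/2-n}\le1$. Substituting the main terms and carrying out the $\xi$-integral first, the $\xi$-phase is linear, $-M\xi\Lambda$ with $\Lambda=\tfrac{\beta_fC}{cp^{2n-r}}\big(\tfrac1{\zeta_1}-\tfrac1{\zeta_2}\big)+\tfrac{\tilde m}{dp^n}$, so the $\xi$-integral is $\ll_A(1+M|\Lambda|)^{-A}$ and is negligible unless $|\Lambda|\ll N^{\varepsilon}/M$. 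Bounding what remains by $N^{\varepsilon}$ times the measure of $\{\,|\zeta_i|\asymp\Delta,\ |\Lambda|\ll N^{\varepsilon}/M\,\}$ — for fixed $\zeta_1$ the admissible $\zeta_2$ fill an interval of length $\ll\Delta^{2}\cdot\tfrac{cp^{2n-r}}{C}\cdot\tfrac{N^{\varepsilon}}{M}$ — and using $M\asymp\Delta^{2}p^{2n-r}$, that measure is $\ll N^{\varepsilon}(c/C)\Delta\le N^{\varepsilon}c/C$, which is the claimed bound. In the complementary configurations, where $\mathfrak{J}_f^{\pm}$ or $\mathfrak{J}_g^{\pm}$ lies in the non-oscillatory low range, that transform is slowly varying and, by integration by parts in its $y$-integral against $e(yN\zeta/(cCp^r))$, supported on $|\zeta|\ll N^{\varepsilon}c/C$; hence $H^{\pm,\pm}(M\xi,\ell,c)\ll N^{\varepsilon}c/C$ pointwise, and $|\mathcal I(\tilde m)|\le\int_{\xi\asymp1}|H^{\pm,\pm}(M\xi,\ell_1,c)|\,|H^{\pm,\pm}(M\xi,\ell_2,c)|\,\mathrm{d}\xi\ll N^{\varepsilon}(c/C)^{2}\le N^{\varepsilon}c/C$.

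The main obstacle is the stationary-phase bookkeeping for $\mathfrak{J}_f^{\pm}$ and $\mathfrak{J}_g^{\pm}$: one must control the narrow $\zeta$-supports and the slow variation of the amplitudes in every parameter, and, more delicately, cope with the possible near-cancellation of the phase $\tfrac1\zeta\big(\tfrac{\ell}{p^r}-\tfrac{m}{p^{2n-r}}\big)$ inside $H^{\pm,\pm}$, which would spoil a direct second-derivative estimate in $\zeta$. Performing the $\xi$-integration before the two $\zeta$-integrations is exactly what defuses this, converting the potential degeneracy into a harmless codimension-one condition on $(\zeta_1,\zeta_2)$ that only improves the measure bound.
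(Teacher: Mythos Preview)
Your proof is correct but takes a genuinely different route from the paper, especially in part~(2).

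For part~(1), both you and the paper argue the same way: the Mellin form \eqref{integral J} localizes $\tau$ to $|\tau|\ll N^{\varepsilon}C/c$, so $\partial_\xi^j$ acting on (a product of two copies of) $\mathfrak J_f^{\pm}(M\xi,c,\zeta)$ costs at most $(N^{\varepsilon}C/c)^j$, and repeated integration by parts in $\xi$ then kills the range $\tilde m\gg N^{\varepsilon}Cdp^n/(cM)$.

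For part~(2), the paper does \emph{not} unfold $\mathcal I(\tilde m)$ into a $(\zeta_1,\zeta_2)$-integral as you do. Instead, it proves a single pointwise bound
\[
\xi^j\,\partial_\xi^j\,H^{\pm,\pm}(M\xi,\ell,c)\ \ll\ N^{\varepsilon}\Big(\tfrac{C}{c}\Big)^{j-1/2},
\]
by writing $H^{\pm,\pm}$ as an integral in $(\zeta,y_1,y_2,\tau)$, using the $\zeta$-integral to restrict $|y_1-y_2|\ll N^{\varepsilon}c/C$, and then applying the two-dimensional second derivative test of Huxley in the variables $(y_1,\tau)$, where the Hessian satisfies $|\det F''|\gg 1$. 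Taking $j=0$ gives $|H^{\pm,\pm}|\ll N^{\varepsilon}(c/C)^{1/2}$, and the trivial estimate $|\mathcal I(\tilde m)|\le\int_{\xi\asymp1}|H^{\pm,\pm}(M\xi,\ell_1,c)|\,|H^{\pm,\pm}(M\xi,\ell_2,c)|\,\mathrm d\xi$ immediately yields the bound $N^{\varepsilon}c/C$.

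Your approach instead carries out explicit one-dimensional stationary phase in the $y$-integrals to produce phases $\propto m/\zeta$ and $\propto \ell/\zeta$, performs the $\xi$-integral first (now with a linear phase) to localize $\zeta_1^{-1}-\zeta_2^{-1}$ to an interval of length $\ll N^{\varepsilon}cp^{2n-r}/(CM)$, and finishes with a measure estimate in $(\zeta_1,\zeta_2)$. This is a valid and in some ways more transparent argument---it uses only one-dimensional stationary phase and avoids the Huxley lemma---and it even yields a slight extra saving of $\Delta\le1$. On the other hand, the paper's route is cleaner: a single derivative estimate on $H^{\pm,\pm}$ handles both parts of the lemma at once, with no need to track asymptotic-expansion error terms, $\zeta$-support overlaps, or the several oscillatory/non-oscillatory case splits that your argument requires.
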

\begin{proof}
By \eqref{integral JJ}, we have
\begin{equation*}
\xi^j \frac{\partial^j }{\partial \xi^j}H^{\pm, \pm}\left(M\xi,\ell,c\right)=
\int_{\mathbb{R}}g(c,\zeta)\mathfrak{J}_g^{\pm}(\ell, c, \zeta)
\xi^j \frac{\partial^j }{\partial \xi^j}
\mathfrak{J}_f^{\pm}(M\xi, c, \zeta)\mathrm{d}\zeta,
\end{equation*}
where by \eqref{integral J},
\begin{align*}
\xi^j \frac{\partial^j }{\partial \xi^j}
\mathfrak{J}_f^{\pm}(M\xi, c, \zeta)=\;&
\frac{1}{2\pi}\int_{\mathbb{R}}(-i\tau)(-i\tau-1)\cdots (-i\tau-j+1) \\&\times
\left(\frac{NM\xi}{c^2p^{2r}}\right)^{-i\tau}
\gamma_f^{\pm}\left(-\frac{1}{2}+i\tau\right)
V^{\dag}\left(\frac{N\zeta}{cCp^r},\frac{1}{2}-i\tau\right)\mathrm{d}\tau.
\end{align*}
By definition,
\begin{equation*}
V^{\dag}\left(\frac{N\zeta}{cCp^r},\frac{1}{2}-i\tau\right)=
\int_0^{\infty}V(y_2)y_2^{-1/2}e\left( -\frac{\tau}{2\pi}\log y_2
+\frac{N\zeta y_2}{cCp^r} \right)\mathrm{d}y_2.
\end{equation*}
By repeated integration by parts implies that the above integral
is negligible unless $ |\tau|\asymp |\xi|N/(cCp^r):=X$.
Moreover, by the second derivative test for exponential integrals
in~\cite[Lemma 5.1.3]{Hux2},
\begin{equation}\label{V-dag11}
V^{\dag}\left(\frac{N\zeta}{cCp^r},\frac{1}{2}-i\tau\right)\ll (1+|\tau|)^{-1/2}.
\end{equation}
Consequently,
\begin{equation*}
\begin{split}
\xi^j \frac{\partial^j }{\partial \xi^j}
\mathfrak{J}_f^{\pm}(M\xi, c, \zeta)=\;&\frac{1}{2\pi}
\int_{\mathbb{R}}\omega\Big(\frac{|\tau|}{X}\Big)
(-i\tau)(-i\tau-1)\cdots (-i\tau-j+1) \\&\times
\left(\frac{NM\xi}{c^2p^{2r}}\right)^{-i\tau}
\gamma_f^{\pm}\left(-\frac{1}{2}+i\tau\right)
V^{\dag}\left(\frac{N\zeta}{cCp^r},
\frac{1}{2}-i\tau\right)\mathrm{d}\tau+O(N^{-A}).
\end{split}
\end{equation*}
where $\omega(x)\in C_c^{\infty}(0,\infty)$
satisfies $\omega^{(j)}(x)\ll_j 1$ for any integer $j\geq 0$.
Hence
\begin{align}\label{H++}
\xi^j \frac{\partial^j }{\partial \xi^j}
H^{\pm, \pm}\left(M\xi,\ell,c\right)=\;&\frac{1}{2\pi}
\int_{\mathbb{R}}g(c, \zeta)\int_{\mathbb{R}}\omega\Big(\frac{|\tau|}{X}\Big)(-i\tau)
(-i\tau-1)\cdots (-i\tau-j+1) \notag\\
&\times\int_0^{\infty}V(y_2)y_2^{-1/2}e\left( -\frac{\tau}{2\pi}\log y_2
+\frac{N\zeta y_2}{cCp^r} \right)\mathrm{d}y_2 \notag \\&\times
\left(\frac{NM\xi}{c^2p^{2r}}\right)^{-i\tau}\gamma_f^{\pm}\left(-\frac{1}{2}+i\tau\right)
\mathfrak{J}_g^{\pm}(\ell, c, \zeta)\mathrm{d}\tau\mathrm{d}\zeta+O(N^{-A}),
\end{align}
where by~\eqref{l large} for $\ell N/c^2p^{2\lambda}\gg N^{\varepsilon}$,
\begin{align*}
\mathfrak{J}_g^{\pm}(\ell, c, \zeta)=\frac{(\ell N)^{1/4}}{c^{1/2}p^{r/2}} \int_0^\infty y_1^{-1/4}
W(y_1)e\left(\frac{\zeta N y_1}{cCp^r}\pm 2 \frac{\sqrt{\ell Ny_1}}{cp^r}\right)\mathrm{d}y_1,
\end{align*}
and by~\eqref{l small} for $\ell N/c^2p^{2r} \ll N^{\varepsilon}$,
\begin{align}\label{I small}
\mathfrak{J}_g^{\pm}(\ell, c, \zeta)=\frac{\sqrt{\ell N}}{cp^r}
\int_0^{\infty}W(y_1)e\left(\frac{\zeta N y_1}{cCp^r}\right)
\mathcal{J}_g^{\pm}\left(\frac{4\pi\sqrt{\ell Ny_1}}{cp^r}\right)\mathrm{d}y_1.
\end{align}
It is easily verified that the integral
\begin{align*}
\int_{\mathbb{R}} g(c,\zeta) e\left( \frac{\zeta N(y_2-y_1)}{cCp^r} \right) \mathrm{d}\zeta
\end{align*}
is negligible unless \( |y_1-y_2| \ll N^{\varepsilon}cCp^r/N \ll N^{\varepsilon}c/C \), 
where \( C = \sqrt{N/p^r} \) (via \eqref{g rapid decay}).  
Using Stirling's formula, we express
\begin{equation*}
\gamma_f^{\pm}\left(-\tfrac{1}{2}+i\tau\right) = \left( \frac{|\tau|}{e\pi} \right)^{2i\tau} \Upsilon_{\pm}(\tau), \quad \Upsilon_{\pm}^{j}(\tau) \ll |\tau|^{-j},
\end{equation*}
for any integer $j \geq 1$.
Thus for $\ell N/c^2p^{2r} \gg N^{\varepsilon}$, applying the substitution 
$y=y_2-y_1$, we rewrite the integral~\eqref{H++} as
\begin{align*}
\xi^j \frac{\partial^j }{\partial \xi^j}H^{\pm, \pm}\left(M\xi,\ell,c\right)=\;&
\frac{(\ell N)^{1/4}}{2\pi c^{1/2}p^{r/2}}\int_{\mathbb{R}}g(c,\zeta)
\int_{|y|\ll N^{\varepsilon}c/C}e\left(\frac{\zeta N y}{cCp^r} \right)\\&
 \times \int_{\mathbb{R}}\int_0^\infty V_0(y_1,\tau;y)
e\left(F(y_1,\tau;y)\right)\mathrm{d}y_1\mathrm{d}\tau\mathrm{d}y\mathrm{d}\zeta.
\end{align*}
where
\begin{align*}
V_0(y_1,\tau;y) = &\; \omega\Big(\frac{|\tau|}{X}\Big)y_1^{-1/4}(y_1+y)^{-1/2}V(y_1)W(y_1+y)\\
&\times(-i\tau)(-i\tau-1)\cdots (-i\tau-j+1) \Upsilon_{\pm}(\tau),
\end{align*}
and
$$
F(y_1,\tau;y)=-\frac{\tau}{2\pi}\log\frac{NM\xi}{c^2p^{2r}}+\frac{\tau}{\pi}\log\frac{|\tau|}{e\pi}
-\frac{\tau}{2\pi}\log (y_1+y)\pm\frac{2\sqrt{\ell Ny_1}}{cp^r}.
$$
Note that the phase function derivatives satisfy
\begin{align*}
\frac{\partial F(y_1,\tau;y)}{\partial y_1}
     &=-\frac{\tau}{2\pi (y_1+y)}\pm \frac{\sqrt{\ell N}}{cp^r}y_1^{-1/2},\\
\frac{\partial F(y_1,\tau;y)}{\partial \tau}
     &=-\frac{1}{2\pi}\log\frac{NM\xi}{c^2p^{2r}}+\frac{1}{\pi}\log\frac{|\tau|}{\pi}
     -\frac{1}{2\pi}\log (y_1+y).
\end{align*}
Thus we have
\begin{align*}
\frac{\partial^2 F(y_1,\tau;y)}{\partial y_1^2}
     &=\frac{\tau}{2\pi (y_1+y)^2}\mp \frac{1}{2}\frac{\sqrt{\ell N}}{cp^r}y_1^{-3/2}\gg |\tau|,\\
\frac{\partial^2 F(y_1,\tau;y)}{\partial \tau^2}
     &=\frac{1}{\pi \tau}\gg |\tau|^{-1},\\
\frac{\partial^2 F(y_1,\tau;y)}{\partial y_1\partial \tau}&= -\frac{1}{2\pi (y_1+y)}.
\end{align*}
This implies that
\begin{align*}
\left|\mathrm{det}F''\right|=\left|\frac{\partial^2 F}{\partial y_1^2}
\frac{\partial^2 F}{\partial \tau^2}
-\left(\frac{\partial^2 F}{\partial y_1\partial \tau}\right)^2\right|\gg 1.
\end{align*}

By applying the two dimensional second derivative test in
\cite[Lemma 5.1.3]{Hux2} with
$\rho_1=|\tau|$, $\rho_2=|\tau|^{-1}$,
we have
\begin{align*}
\int_{\mathbb{R}}\int_0^\infty V_0(y_1,\tau;y)
e\left(F(y_1,\tau;y)\right)\mathrm{d}y_1\mathrm{d}\tau
\ll X^{j}\ll  \left(\frac{|\xi|N}{cCp^r}\right)^{j}
\ll \left(\frac{C}{c}\right)^{j}
\end{align*}
recalling $ C= \sqrt{N/p^r}$. It follows that
\begin{align*}
\xi^j \frac{\partial^j }{\partial \xi^j}H^{\pm, \pm}\left(M\xi,\ell,c\right)\ll
N^\varepsilon \left(\frac{C}{c}\right)^{j-1/2}.
\end{align*}
This also holds when $\ell N/c^2p^{2r} \ll N^{\varepsilon}$. 
In fact, by~\eqref{H++} and~\eqref{I small}, we have
\begin{align*}
\xi^j \frac{\partial^j }{\partial \xi^j}H^{\pm, \pm}\left(M\xi, \ell ,c\right)
&=\frac{\sqrt{\ell N}}{2\pi cp^r}\int_0^{\infty}V(y_1)
\mathcal{J}_g^{\pm}\left(\frac{4\pi\sqrt{\ell Ny_1}}{cp^r}\right)
\nonumber\\&\times\int_{\mathbb{R}}\omega\Big(\frac{|\tau|}{X}\Big)
  (-i\tau)(-i\tau-1)\cdots (-i\tau-j+1)
\left(\frac{NM\xi}{c^2p^{2r}}\right)^{-i\tau}
\gamma_f^{\pm}\left(-\frac{1}{2}+i\tau\right)\nonumber\\&
\times\int_{\mathbb{R}}g(c,\zeta)e\left(\frac{N\zeta}{cCp^r}\right)
V^{\dag}\left(\frac{N\zeta}{cCp^r},\frac{1}{2}-i\tau\right)
\mathrm{d}y_1 \mathrm{d}\tau \mathrm{d}\zeta+O(N^{-A}).
\end{align*}
As before, by considering the $\zeta$-integral and integrating by parts, we find that
the above integral is negligible unless 
$|y_1-y_2|\ll N^{\varepsilon}cCp^r/N\ll N^{\varepsilon}c/C$.
Furthermore, by Remark~\ref{JK remark}, we have
$$\mathcal{J}_g^{\pm}\left(\frac{4\pi\sqrt{\ell Ny_1}}{cp^r}\right)
\ll \left(\frac{\ell N }{c^2p^{2r}}\right)^{-1/4}.$$
Combining these estimates with~\eqref{V-dag11}, we obtain
\begin{align*}
\xi^j \frac{\partial^j }{\partial \xi^j}H^{\pm, \pm}\left(M\xi,\ell,c\right)
\ll  \left(\frac{\ell N }{c^2p^{2r}}\right)^{1/4}\frac{cN^{\varepsilon}}{C}
X^{j+1/2}\ll N^{\varepsilon}\left(\frac{C}{c}\right)^{j-1/2}.
\end{align*}
Hence by applying integration by parts repeatedly on the $\xi$-integral in \eqref{integral-H} and evaluating the
resulting $\xi$-integral trivially, we find that
$$
\mathcal{I}(\tilde{m}) \ll_j N^\varepsilon\; \frac{c}{C}
\left(\frac{\tilde{m} M c}{Cdp^n}\right)^{-j}
$$
for any integer $ j\geq 0$. Therefore, $\mathcal{I}(\tilde{m}) $ is negligible 
unless $$\tilde{m} \leq \frac{N^\varepsilon Cdp^n}{cM}. $$
Moreover, by taking $ j=0$, one has
$$
\mathcal{I}(\tilde{m}) \ll_j \frac{N^\varepsilon c}{C}.
$$
This completes the proof of the Lemma \ref{integral:lemma}.
\end{proof}

\subsection{Evaluation of the character sum}
In this subsection we estimate the character sums in~\eqref{character sums}.
We write $u=d \overline{d}u_{1}+p^n \overline{p^n}u_{2}$ , 
with $u_1\bmod p^n$ and $u_2\bmod d$. Then we obtain
\begin{equation*}
\mathcal{K}(\tilde{m})=
\sum_{u_1\bmod p^n}G(u_1, \mp \ell_1,c)\overline{G(u_1, \mp \ell_2,c)}
e\left(\pm \frac{\tilde{m} \overline{d} u_1}{p^n}\right)
\;\sideset{}{^*}\sum_{u_2\bmod d \atop u_2\equiv \ell_1p^{2n-r}
\overline{p^r}\bmod d}e\left(\frac{\tilde{m} \overline{p^n} u_2}{d}\right).
\end{equation*}
We therefore arrive at the expression
\begin{equation*}
\mathcal{K}(\tilde{m})=e\left(\frac{\tilde{m}\ell_1p^{n-r}\overline{p^r}}{d}\right)
\mathcal{C}(\pm \tilde{m}, \mp \ell_1, \mp \ell_2),
\end{equation*}
where
\begin{equation}\label{C(m)}
\mathcal{C}(\tilde{m}, \ell_1, \ell_2)=
\sum_{u\bmod p^n}G(u,\ell_1,c)\overline{G(u,\ell_2,c)}
e\left(\frac{\tilde{m} \overline{d} u}{p^n}\right).
\end{equation}

So to estimate $\mathcal{K}(\tilde{m})$, we only need to evaluate $\mathcal{C}(\tilde{m}, \ell_1, \ell_2)$. To evaluate such a sum further, we will need to make a few elementary transformations. Again, using the Fourier expansion of $\chi$ in terms of additive characters, we have 
\begin{align*} 
G(m,\ell,c)
=\frac{1}{q}\sum_{u \bmod p^n}\;\sideset{}{^*}\sum_{b \bmod p^n}
\;\sideset{}{^*}\sum_{a\bmod{p^r}}\chi(u)e\left(\frac{bu}{p^n}+\frac{m\overline{c}
\overline{(ap^{n-r}+bc)}}{p^n}\right)e\left(-\frac{\overline{c}\ell\overline{a}}{p^r}\right).
\end{align*}
On changing the variable $ap^{n-r}+bc=h$, we get
\begin{align}\label{character sums0}
G(m,\ell,c)=&\;\frac{1}{q}\sum_{u \bmod p^n}\;\sideset{}{^*}\sum_{h \bmod p^n}
\;\sideset{}{^*}\sum_{a\bmod{p^r}}\chi(u)e\left(\frac{u\overline{c}(h-ap^{n-r})}{p^n}
+\frac{m\overline{c}\overline{h}}{p^n}\right)
e\left(-\frac{\overline{c}\ell\overline{a}}{p^r}\right)\nonumber\\
=&\;\frac{1}{q}\sum_{u \bmod p^n} \chi (u) 
S\big(u\overline{c}, m\overline{c}; p^n\big)
S\big(u \overline{c}, \ell\overline{c}; p^r\big).
\end{align}
\begin{remark}
It is precisely due to this transformation 
that we can directly apply the $p$-adic method of stationary phase 
to estimate the character sum $G(m,\ell,c)$,
which is also why our results outperform Sun's~\cite{S}.
This trick is similar in spirit to that of Lin, Michel, 
and Sawin~\cite[Section 4.2]{LMS}, who used high-level 
algebraic geometry to estimate the algebraic exponential 
sums that arise in their calculations.
\end{remark}
Using this transformation, we can get 
\begin{align*}
\mathcal{C}(\tilde{m}, \ell_1, \ell_2)=\frac{1}{q^2}&\sum_{\beta\bmod p^n} 
\sum_{u_1 \bmod p^r}\chi(u_1)S\big(u_1\overline{c}, \beta\overline{c}; p^n\big)
S\big(u_1 \overline{c}, \ell_1\overline{c}; p^r\big)\\
&\times\sum_{u_2 \bmod p^r} \overline{\chi} (u_2)
S\big(u_2\overline{c}, \beta\overline{c}; p^n\big)
S\big(u_2 \overline{c}, \ell_2\overline{c}; p^r\big)
e\left(\frac{\tilde{m}\overline{d}\beta}{p^n}\right).
\end{align*}
We now provide bounds for the character sums $\mathcal{C}(\tilde{m}, \ell_1, \ell_2)$ 
for $\tilde{m}\in \mathbb{Z}$ with different methods on whether $\tilde{m}=0$ or not.

For the case $\tilde{m}=0$, an estimate of $\mathcal{C}(0, \ell_1, \ell_2)$ can 
be obtained in an elementary manner by reducing the problem 
to the Ramanujan sum. This yields the following lemma.
\begin{lemma}\label{zero}
Let $\mathcal{C}(\tilde{m}, \ell_1, \ell_2)$ be defined as in~\eqref{C(m)}. 
We have
$$\mathcal{C}(0, \ell_1, \ell_2) = p^{n+r}
\delta (\ell_1\equiv \ell_2\bmod p^r)
-p^{n+r-1}\delta (\ell_1\equiv \ell_2\bmod p^{r-1}).$$
\end{lemma}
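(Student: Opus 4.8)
The plan is to unfold $\mathcal{C}(0,\ell_1,\ell_2)$ via the Kloosterman-sum expression \eqref{character sums0} for $G$ and then collapse the resulting multiple sum by elementary orthogonality, reducing everything to Ramanujan sums. Inserting
\[
G(u,\ell_i,c)=\frac1q\sum_{v_i\bmod p^n}\chi^{(i)}(v_i)\,S(v_i\overline c,u\overline c;p^n)\,S(v_i\overline c,\ell_i\overline c;p^r),\qquad \chi^{(1)}=\chi,\ \chi^{(2)}=\overline\chi,
\]
into $\mathcal{C}(0,\ell_1,\ell_2)=\sum_{u\bmod p^n}G(u,\ell_1,c)\overline{G(u,\ell_2,c)}$, I would first carry out the sum over the external variable $u$, using the standard identity $\sum_{u\bmod q}S(a,u;q)\,\overline{S(b,u;q)}=q\,S(0,a-b;q)$ (expand both Kloosterman sums and evaluate the complete additive sum in $u$, which forces the two internal variables to coincide). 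With $q=p^n$ and $(c,p)=1$ this yields $\sum_{u\bmod p^n}S(v_1\overline c,u\overline c;p^n)\,\overline{S(v_2\overline c,u\overline c;p^n)}=p^n\,S(0,v_1-v_2;p^n)$, where $S(0,\cdot\,;p^n)$ is the Ramanujan sum.

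Feeding in $S(0,v_1-v_2;p^n)=p^n\delta(v_1\equiv v_2\bmod p^n)-p^{n-1}\delta(v_1\equiv v_2\bmod p^{n-1})$ and $q=p^n$ splits $\mathcal{C}(0,\ell_1,\ell_2)$ into two pieces. On the first, $v_1=v_2$, so $\chi(v_1)\overline\chi(v_1)=1$ and the numerical factor $p^{2n}/q^2$ equals $1$; this piece is exactly
\[
\Xi:=\sum_{v\bmod p^n}^{*}S(v\overline c,\ell_1\overline c;p^r)\,\overline{S(v\overline c,\ell_2\overline c;p^r)}.
\]
On the second, write $v_2=v_1+p^{n-1}t$; since $r\leq n-1$ the two Kloosterman sums modulo $p^r$ are unaffected, so this piece carries the factor $\sum_{t\bmod p}\overline\chi(v_1+p^{n-1}t)=\overline\chi(v_1)\sum_{t\bmod p}\overline\chi(1+p^{n-1}\overline{v_1}\,t)$, which vanishes because $\chi$, being primitive modulo $p^n$, restricts nontrivially to the order-$p$ kernel of the reduction map $(\mathbb{Z}/p^n\mathbb{Z})^{\times}\to(\mathbb{Z}/p^{n-1}\mathbb{Z})^{\times}$. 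Hence only the diagonal survives and $\mathcal{C}(0,\ell_1,\ell_2)=\Xi$.

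To evaluate $\Xi$, observe that its summand depends on $v$ only modulo $p^r$, so $\Xi=p^{n-r}\sum_{v\bmod p^r}^{*}S(v\overline c,\ell_1\overline c;p^r)\,\overline{S(v\overline c,\ell_2\overline c;p^r)}$; expanding the two Kloosterman sums and summing over the units $v\bmod p^r$ produces the Ramanujan sum $S(0,x_1-x_2;p^r)$ in the internal variables, whence
\[
\Xi=p^{n-r}\sum_{x_1,x_2\bmod p^r}^{*}S(0,x_1-x_2;p^r)\,e\!\Big(\frac{\ell_1\overline c\,\overline{x_1}-\ell_2\overline c\,\overline{x_2}}{p^r}\Big).
\]
Using $S(0,x_1-x_2;p^r)=p^r\delta(x_1\equiv x_2\bmod p^r)-p^{r-1}\delta(x_1\equiv x_2\bmod p^{r-1})$, the first term forces $x_1=x_2$ and contributes $p^{n-r}\cdot p^r\,S(0,(\ell_1-\ell_2)\overline c;p^r)=p^{n}\,S(0,\ell_1-\ell_2;p^r)$, which is precisely the right-hand side of the lemma once the Ramanujan sum is unwound. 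For the second term, put $x_2=x_1+p^{r-1}w$ with $w\bmod p$ and use $\overline{x_2}\equiv\overline{x_1}-p^{r-1}\overline{x_1}^{2}\,w\bmod p^r$ (valid for $r\geq2$); the $w$-sum then becomes a complete additive sum modulo $p$ whose linear coefficient is a $p$-adic unit times $\ell_2$, so it vanishes when $p\nmid\ell_2$. (When $p\mid\ell_1\ell_2$ the same computation instead gives $\mathcal{C}(0,\ell_1,\ell_2)=0$, which is still consistent with the stated estimate; so the identity as written is meant for $(\ell_1\ell_2,p)=1$, the case used afterwards.)

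The argument is elementary throughout; the points that need care are (i) keeping straight which variables run over all residues versus units across the three successive collapses, (ii) the primitivity input that kills the $p^{n-1}$-level term, and (iii) recognising that the secondary term $-p^{n+r-1}\delta(\ell_1\equiv\ell_2\bmod p^{r-1})$ in the answer is produced not by the $p^{r-1}$-level off-diagonal of the final step (which vanishes) but by the Ramanujan sum $S(0,\ell_1-\ell_2;p^r)$ arising from its diagonal. I expect the bookkeeping in (i) of the nested orthogonality relations to be where slips are most likely, though nothing here is deep.
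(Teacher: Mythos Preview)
Your proof is correct and follows essentially the same route as the paper: both start from the Kloosterman-sum form \eqref{character sums0} of $G$, collapse the outer $u$-sum to a Ramanujan sum $S(0,v_1-v_2;p^n)$, kill the $p^{n-1}$-level off-diagonal via primitivity of $\chi$, and reduce the surviving diagonal to $p^nS(0,\ell_1-\ell_2;p^r)$. Your treatment of the final step is in fact slightly more careful than the paper's, since you correctly keep the $v$-sum over units (the paper silently extends it to all residues), handle the resulting secondary $p^{r-1}$-level term explicitly, and flag the $(\ell_1\ell_2,p)=1$ hypothesis under which it vanishes.
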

\begin{proof}
Let $\tilde{m}=0$, we have
\begin{align*}
\mathcal{C}(0, \ell_1, \ell_2)=\frac{1}{q^2}&\sum_{\beta\bmod p^n} 
\sum_{u_1 \bmod p^n}\chi(u_1)S\big(u_1\overline{c}, \beta\overline{c}; p^n\big)
S\big(u_1 \overline{c}, \ell_1\overline{c}; p^r\big)\\
&\times\sum_{u_2 \bmod p^n} \overline{\chi} (u_2)  
S\big(u_2\overline{c}, \beta\overline{c}; p^n\big)
S\big(u_2 \overline{c}, \ell_2\overline{c}; p^r\big).
\end{align*}
Opening the Kloosterman sums and executing the sum over $\beta$, we arrive at
\begin{align*}
\mathcal{C}(0, \ell_1, \ell_2)=\frac{1}{q}&\;\sum_{u_1 \bmod p^n}\sum_{u_2 \bmod p^n} \chi (u_1)\overline{\chi}(u_2)
\;\sideset{}{^*}\sum_{\gamma_1\bmod p^r}
e\left(\frac{\gamma_1u_1 \overline{c}+\overline{\gamma_1}\ell_1 \overline{c}}{p^r}\right)
\\&\times \;\sideset{}{^*}\sum_{\gamma_2\bmod p^r}
e\left(\frac{\gamma_2 u_2 \overline{c}+\overline{\gamma_2}\ell_2 \overline{c}}{p^r}\right)
\;\sideset{}{^*}\sum_{\gamma\bmod p^n}
e\left(\frac{u_1 \overline{c}-u_2 \overline{c}}{p^n}\gamma\right).
\end{align*}
The sum over $\gamma$ equals $S(0,u_1 \overline{c}-u_2 \overline{c}; p^n)$ 
is the Ramanujan sum, then we have
\begin{align}\label{2 terms}
S(0,u_1 \overline{c}-u_2 \overline{c}; p^n)=p^{n}\delta(u_2\equiv u_1\bmod p^n)
-p^{n-1}\delta(u_2\equiv u_1\bmod p^{n-1}).
\end{align}
We first consider the contribution from the second 
term in~\eqref{2 terms} to $\mathcal{C}(0, \ell_1, \ell_2)$.
Note that
\begin{align}\label{second term}
\sum_{\substack{u_2 \bmod p^n\\u_2\equiv u_1\bmod p^{n-1}}}
\overline{\chi}(u_2)e\left(\frac{\gamma_2 u_2  \overline{c}}{p^r}\right)
=e\left(\frac{\gamma_2 u_2  \overline{c}}{p^r}\right)\overline{\chi}(u_1)
\sum_{\nu \bmod p }\chi(1+\overline{u_1}p^{n-1}\nu).
\end{align}
Recall that $\chi$ is a primitive character of modulus $p^n$.
Thus $\chi(1+tp^{n-1})$ is an additive character of modulus $p$.
By Lemma~\ref{Postnikov}, there exists $\alpha \in \mathbb{Z}_p^{\times}$ such that
$\chi(1+tp^{n-1})=e(\alpha t/p)$. Therefore the $\nu$-sum in~\eqref{second term} vanishes and 
the contribution from the second term in~\eqref{2 terms} to $\mathcal{C}(0, \ell_1, \ell_2)$ is zero. 
Plugging the first term in~\eqref{2 terms} into $\mathcal{C}(0, \ell_1, \ell_2)$, we get
\begin{align*}
\mathcal{C}(0, \ell_1, \ell_2)
&=\;\sideset{}{^*}\sum_{\gamma_1\bmod p^r}\;\sideset{}{^*}\sum_{\gamma_2\bmod p^r}
e\left(\frac{\overline{\gamma_1}\ell_1 \overline{c}+\overline{\gamma_2}\ell_2 \overline{c}}{p^r}\right)
 \sum_{u_1 \bmod p^n} e\left(\frac{(\gamma_2+\gamma_1)\overline{c}u_1}{p^r}\right)\\
&=p^n\;\sideset{}{^*}\sum_{\gamma_1\bmod p^r}
e\left(\frac{(\ell_1-\ell_2)\overline{c}\gamma_1}{p^r}\right)\\
&=p^nS(0, \ell_1-\ell_2; p^r).
\end{align*}
Thus the lemma follows.
\end{proof}

We split the proof the case $\tilde{m}\not=0$ in two steps, which are treated in 
Lemmas~\ref{K(m,l,c)} and~\ref{p-adic van der Corput estimates} below.

\subsubsection{$p$-adic stationary phase computation}
Now we need to analyze the character sum $G(m,\ell,c)$ by a non-archimedean stationary phase argument.
The resulting sum is a complete exponential sum, it is exactly evaluated in 
the following lemma by using the $p$-adic method of stationary phase, Lemma~\ref{statphase-lemma},
and exploiting $p$-adic local information; this is also facilitated by an 
explicit evaluation of the Kloosterman sum in Lemma~\ref{kloost-eval}.
\begin{lemma}\label{K(m,l,c)}
Let $G(m,\ell,c)$ be defined as in~\eqref{character sums0}. 
Then $G(m,\ell,c)=0$ unless $mc^{-2}  \in \Z_p^{\times 2}$ and $\ell c^{-2} \in \Z_p^{\times 2}$, 
in which case $G(m,\ell,c)$ can be exactly evaluated as follows. Let 
$\rho_1$ resp. $\rho_2$ be $0$ or $1$ according as $n$ resp. $r$ is even or odd.
For $\sigma_1, \sigma_2 \in \{\pm 1\}$, define $\mathcal{Z}^{\sigma_1, \sigma_2}_{\ell, c}(u)$ 
as for $u\in \mathbb{Z}_p^{\times 2}$, 
\begin{align*}
\mathcal{Z}^{\sigma_1, \sigma_2}_{\ell, c}(u) := 
\epsilon\big(\sigma_1\alpha, p^{\rho_1}\big)
\epsilon\big(\sigma_2\alpha(\overline{u}\ell)_{1/2}, p^{\rho_2}\big)
\overline{\chi}\big((\sigma_1u_{1/2}+\sigma_2\ell_{1/2}p^{n-r})^2\big).
\end{align*}
Then we have
\begin{align*}
G(m,\ell,c) =  p^{r/2}\chi(\alpha^2c^2)
\theta\left(-\frac{2\alpha}{p^n}\right)
\epsilon\big(-\overline{2}\alpha, p^{\rho_1}\big)
\sum_{\sigma_1, \sigma_2\in\{\pm1\}}
\mathcal{Z}^{\sigma_1, \sigma_2}_{\ell,c}(m).
\end{align*}
\end{lemma}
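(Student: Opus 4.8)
The plan is to evaluate $G(m,\ell,c)$ in three moves: open the two Kloosterman sums in \eqref{character sums0} via Lemma~\ref{kloost-eval}; linearise the resulting $p$-adic square roots by a multiplicative substitution, so that the inner sum over $u$ becomes a Gauss sum of a character twisted by a $p$-adic unit; and finally evaluate that Gauss sum by the $p$-adic method of stationary phase, Lemma~\ref{statphase-lemma}, fed with the Postnikov expansion \eqref{chi expansion}.

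First I would apply Lemma~\ref{kloost-eval} to $S\big(u\overline{c},m\overline{c};p^n\big)$ and to $S\big(u\overline{c},\ell\overline{c};p^r\big)$. Each vanishes identically unless the product of its two arguments is a $p$-adic square, which gives the stated vanishing of $G$; in the surviving case one obtains
\[
G(m,\ell,c)=\frac{p^{(n+r)/2}}{p^n}\sum_{\sigma_1,\sigma_2\in\{\pm1\}}\ \sum_{u}\chi(u)\,\epsilon\big(\sigma_1(um\overline{c}^2)_{1/2},p^{\rho_1}\big)\epsilon\big(\sigma_2(u\ell\overline{c}^2)_{1/2},p^{\rho_2}\big)\,\theta\!\left(\frac{2\sigma_1(um\overline{c}^2)_{1/2}}{p^n}+\frac{2\sigma_2(u\ell\overline{c}^2)_{1/2}}{p^r}\right),
\]
where $u$ runs over the coset of $\Z_p^{\times 2}$ modulo $p^n$ on which both Kloosterman sums are nonzero. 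Substituting $u=c^2\overline{m}\,w^2$ with $w$ over $(\Z/p^n\Z)^\times$ (using that squaring is two-to-one onto $\Z_p^{\times 2}$) turns $(um\overline{c}^2)_{1/2}=\pm w$ and $(u\ell\overline{c}^2)_{1/2}=\pm w(\overline{m}\ell)_{1/2}$ into linear functions of $w$ — the $w$-dependent signs being harmless since both already occur in the evaluation of Lemma~\ref{kloost-eval} — and $\chi(u)=\chi(c^2\overline{m})\chi(w)^2$. Since $\epsilon(\sigma_i\cdots w\cdots,p^{\rho_i})=\epsilon(\sigma_i\cdots,p^{\rho_i})\big(\tfrac{w}{p}\big)^{\rho_i}$, pulling out all $w$-independent factors reduces the summand for each $(\sigma_1,\sigma_2)$ to a constant (involving $\chi(c^2\overline{m})$ and the $\epsilon$'s) times the Gauss sum
\[
\sum_{w\bmod p^n}^{*}\chi'(w)\,\theta\!\left(\frac{\beta_{\sigma_1,\sigma_2}\,w}{p^n}\right),\qquad \chi'=\chi^2\cdot\big(\tfrac{\cdot}{p}\big)^{\rho_1+\rho_2},\qquad \beta_{\sigma_1,\sigma_2}=2\sigma_1+2\sigma_2\,p^{n-r}(\overline{m}\ell)_{1/2}\in\Z_p^{\times},
\]
where we used $r<n$ to see that $\beta_{\sigma_1,\sigma_2}$ is a unit (the $p^r$-scale $\ell$-part folds into a unit shift of the modulus-$p^n$ phase).

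Next, since $p$ is odd and $\chi$ is primitive modulo $p^n$, the twisted character $\chi'$ is again primitive modulo $p^n$, so the Gauss sum equals $\overline{\chi'}(\beta_{\sigma_1,\sigma_2})\,\tau(\chi')$. To evaluate $\tau(\chi')$ I would apply Lemma~\ref{statphase-lemma}(2) to $w\mapsto\chi'(w)\theta(w/p^n)$: by \eqref{chi expansion} this satisfies \eqref{diffble-eq2} at any level $\kappa$ with $(n+\iota)/3\le\kappa\le n/2$, so that the $\mathbf{M}_{p^{3\kappa-n-\iota}}$ error lies in $\Z_p$ and is invisible to $\theta$, with $\phi_1(w)=(2\alpha+w)/(p^n w)$ up to lower order and $\phi_2(w)$ of order exactly $\mu=-n$; hence $\rho=\rho_1$, the stationary point is $w\equiv-2\alpha$, and the standard value-at-critical-point computation yields $\tau(\chi')=p^{n/2}\,\epsilon(-\overline{2}\alpha,p^{\rho_1})\,\chi'(-2\alpha)\,\theta(-2\alpha/p^n)$ after routine simplification. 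Reassembling: the powers of $p$ collapse to $p^{(n+r)/2}\cdot p^{n/2}/p^n=p^{r/2}$; the identity $\beta_{\sigma_1,\sigma_2}^{2}=4\overline{m}\,(\sigma_1 m_{1/2}+\sigma_2\ell_{1/2}p^{n-r})^{2}$ (expand the square, noting $2(n-r)\le n$ so every term survives modulo $p^n$) together with $\chi'(-2\alpha)=\chi(4\alpha^2)\cdot(\text{sign})$ and the prefactor $\chi(c^2\overline{m})$ makes all $\chi$-factors collapse to $\chi(\alpha^2c^2)\,\overline{\chi}\big((\sigma_1 m_{1/2}+\sigma_2\ell_{1/2}p^{n-r})^{2}\big)$; and the $\epsilon$-symbols recombine into $\epsilon(\sigma_1\alpha,p^{\rho_1})\,\epsilon(\sigma_2\alpha(\overline{m}\ell)_{1/2},p^{\rho_2})$ times the global $\epsilon(-\overline{2}\alpha,p^{\rho_1})$. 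Summing over the four $(\sigma_1,\sigma_2)$ then gives precisely the asserted formula with $\mathcal{Z}^{\sigma_1,\sigma_2}_{\ell,c}(m)$.

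The main obstacle is not conceptual but organisational. Matching each $(\sigma_1,\sigma_2)$-term to $\mathcal{Z}^{\sigma_1,\sigma_2}_{\ell,c}(m)$ on the nose requires tracking the sign and branch ambiguities of the $p$-adic square roots — where $(xy)_{1/2}=\pm x_{1/2}y_{1/2}$ with a sign that has to be absorbed consistently into the $\sigma_i$ across the $\theta$-phase, the $\epsilon$-factors and the $\overline{\chi}$-factor all at once — together with the normalisations of the quadratic Gauss sums $\epsilon(A,p^{\rho})$, the extra Legendre twist $\big(\tfrac{\cdot}{p}\big)^{\rho_1+\rho_2}$, and the exceptional constant $\iota$ when $p=3$ in \eqref{chi expansion}; one must also check that the level $\kappa$ chosen for Lemma~\ref{statphase-lemma}(2) is available, i.e.\ $(n+\iota)/3\le n/2$, which holds since $n\ge 2$. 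The location of the stationary point, the critical-value computation, and the degenerate cases $p\mid m$ or $p\mid\ell$ are all routine.
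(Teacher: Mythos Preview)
Your approach is correct in principle but takes a genuinely different route from the paper.  The paper applies the $p$-adic stationary phase (Lemma~\ref{statphase-lemma}(2)) \emph{directly} to the $u$-sum with the square-root phases $(um)_{1/2}$ and $(u\ell)_{1/2}$ still present, using the expansion~\eqref{square power series expansion} to compute $\psi(u+p^\kappa t)$ and then locating the unique stationary point $u\equiv\big(\alpha c/(\sigma_1 m_{1/2}+\sigma_2\ell_{1/2}p^{n-r})\big)^2$.  You instead perform the quadratic substitution $u=c^2\overline{m}\,w^2$ to \emph{linearise} the square roots first, which collapses the inner sum to a standard Gauss sum $\overline{\chi'}(\beta_{\sigma_1,\sigma_2})\tau(\chi')$ for the primitive character $\chi'=\chi^2\cdot(\tfrac{\cdot}{p})^{\rho_1+\rho_2}$; only then do you invoke stationary phase, and on the much simpler object $\tau(\chi')$.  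Your route is more elementary in that it avoids the square-root Taylor expansion~\eqref{square power series expansion} entirely, at the cost of heavier bookkeeping: the two-to-one map $w\mapsto u$ introduces a factor $\tfrac12$ that you do not make explicit (it is eventually cancelled by the invariance of your $(\sigma_1,\sigma_2)$-summand under $(\sigma_1,\sigma_2)\to(-\sigma_1,-\sigma_2)$, which you should check matches the corresponding symmetry $\mathcal{Z}^{-\sigma_1,-\sigma_2}=(\tfrac{-1}{p})^{\rho_1+\rho_2}\mathcal{Z}^{\sigma_1,\sigma_2}$ on the target side).  Also, a quick check of $\phi_2$ for $\tau(\chi')$ gives $(\phi_2)_0\equiv -2\alpha/w^2$, so the $\epsilon$-factor coming out of Lemma~\ref{statphase-lemma}(2) is $\epsilon(-\alpha,p^{\rho_1})$ rather than $\epsilon(-\overline{2}\alpha,p^{\rho_1})$ as you wrote; the missing $(\tfrac{2}{p})^{\rho_1}$ must reappear when you recombine $\chi'(-2\alpha)$, $\overline{\chi'}(\beta)$, and the Legendre twists.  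These are exactly the ``organisational'' issues you flag, and none of them is a genuine obstruction --- but they do need to be executed carefully to land on the stated formula term by term.
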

\begin{proof}
We can evaluate the Kloosterman sum by Lemma~\ref{kloost-eval}. We find
that $  S\big(u\overline{c}, m\overline{c}; p^n\big) = 0$ unless 
$umc^{-2}  \in \Z_p^{\times 2}$, and 
$S\big(u \overline{c}, \ell\overline{c}; p^r\big) = 0$
unless $u\ell c^{-2} \in \Z_p^{\times 2}$. If this condition is satisfied,
then
\begin{align*}
G(m,\ell,c)=p^{(r-n)/2}&\sum_{u \bmod p^n}
\chi(u)\sum_{\sigma_1, \sigma_2\in\{\pm1\}}
\epsilon\big(\sigma_1(um)_{1/2}\overline{c}, p^{\rho_1}\big)\\
&\times\epsilon\big(\sigma_2(u\ell)_{1/2}\overline{c}, p^{\rho_2}\big)
\theta\left(\sigma_1\frac{2(um)_{1/2}}{cp^n}
+\sigma_2\frac{2(u\ell)_{1/2}}{cp^r}\right),
\end{align*}
with $\rho_1$ and $\rho_2$ as in the statement of the lemma.
Here, we replaced for convenience $(umc^{-2})_{1/2}$ and $(u\ell c^{-2})_{1/2}$ 
by $(um)_{1/2}c^{-1}$ and $(u\ell)_{1/2}c^{-1}$, respectively. 
Since they differ only by a unit factor $\delta \in \{\pm1\}$  that is the same for all
$u\in \mathbb{Z}_p^{\times 2}$ and consequently absorbed by the $\varepsilon$-sum. 
Also, note that the $\epsilon$-term only depends on $u \bmod p^{\rho_1}$ and $u \bmod p^{\rho_2}$.

Using~\eqref{chi expansion} and~\eqref{square power series expansion},
it follows that the function $\psi_{m,\ell, c}^{\sigma_1, \sigma_2}(u)$, 
defined for $u\in \mathbb{Z}_p^{\times 2}$ as
\begin{align*}
\psi_{m, \ell, c}^{\sigma_1, \sigma_2}(u) : = \;&\chi (u) 
\epsilon\big(\sigma_1(um)_{1/2}\overline{c}, p^{\rho_1}\big)
\epsilon\big(\sigma_2(u\ell)_{1/2}\overline{c}, p^{\rho_2}\big)
\theta\left(\sigma_1\frac{2(u m)_{1/2}}{cp^n}
+\sigma_2\frac{2(u\ell)_{1/2}}{cp^r}\right),
\end{align*}
satisfies
\begin{align*}
\psi_{m, \ell, c}^{\sigma_1, \sigma_2}(u+p^\kappa t) = \psi_{m,\ell, c}^{\sigma_1, \sigma_2}(u)
\theta\bigg(&\frac{\alpha}{p^n}\Big(\frac{1}{u}p^\kappa t
-\frac{1}{2u^2}p^{2\kappa} t^2\Big)+\frac{\sigma_1}{cp^n}
\Big(\frac{1}{(mu)_{1/2}}p^\kappa mt - \frac{1}{4(mu)_{1/2}^3} p^{2\kappa} m^2 t^2\Big)
\\&+\frac{\sigma_2}{cp^r}\Big(\frac{1}{(u\ell)_{1/2}}p^\kappa \ell t 
- \frac{1}{4(u\ell)_{1/2}^3} p^{2\kappa} \ell^2 t^2\Big)\bigg)\\
= \psi_{m, \ell, c}^{\sigma_1, \sigma_2}(u)
\theta\bigg(&\Big(\frac{\alpha}{u}+\frac{\sigma_1m}{(mu)_{1/2}c}
+\frac{\sigma_2\ell p^{n-r}}{(u\ell)_{1/2}c}\Big)p^{\kappa-n} t\\&
-\Big(\frac{\alpha}{2u^2}+\frac{\sigma_1m^2}{4(mu)_{1/2}^3c}
+\frac{\sigma_2\ell^2p^{n-r}}{4(u\ell)_{1/2}^3c}\Big)p^{2\kappa-n} t^2\bigg),
\end{align*}
for every $t\in \Z_p$ and every $3\kappa \geq n+\iota$.
We now apply Lemma~\ref{statphase-lemma} (2) to $G(m,\ell,c)$ with $\mu=-n$,
which we may do as long as $n\geq 2\kappa$.
Writing $n=2\nu+\rho_1$, we find that
\begin{align*}
G(m,\ell,c)=p^{r/2}&\sum_{\sigma_1, \sigma_2\in\{\pm1\}}
\sum_{u}\psi_{m, \ell, c}^{\sigma_1, \sigma_2}(u)\epsilon\big(-\overline{2}\alpha, p^{\rho_1}\big)\\
&\times\theta\bigg(\frac{u^2}{\alpha}\Big(\frac{\alpha}{u}+\frac{\sigma_1m}{(mu)_{1/2}c}
+\frac{\sigma_2\ell p^{n-r}}{(u\ell)_{1/2}c}\Big)^2p^{-n}\bigg),
\end{align*}
where summation is over all $u\bmod p^\nu$, such that
\begin{equation*}
\frac{\alpha}{u}+\frac{\sigma_1m}{(mu)_{1/2}c}
+\frac{\sigma_2\ell p^{n-r}}{(\ell u)_{1/2}c} 
\in p^{n-\nu-\rho_1}\Z_p=p^{\nu}\Z_p.
\end{equation*}
This condition can be written equivalently as
\begin{equation*}
\alpha c+\sigma_1(mu)_{1/2}
+\sigma_2(\ell u)_{1/2}p^{n-r}
\equiv 0 \bmod p^{\nu}.
\end{equation*}
This congruence has one solutions, yields 
the corresponding stationary point $u$:
\begin{equation*}
u \equiv \left(\frac{\alpha c}{\sigma_1m_{1/2}
+\sigma_2\ell_{1/2}p^{n-r}}\right)^2 \bmod p^{\nu}.
\end{equation*}
Returning to the result of the stationary phase 
evaluation of $G(m,\ell,c)$, we get that
\begin{align*}
G(m,\ell,c) = \;& p^{r/2}\chi(\alpha^2c^2)
\theta\left(-\frac{2\alpha}{p^n}\right)
\epsilon\big(-\overline{2}\alpha, p^{\rho_1}\big)
\sum_{\sigma_1, \sigma_2\in\{\pm1\}}
\epsilon\big(\sigma_1\alpha, p^{\rho_1}\big)\\&\times
\epsilon\big(\sigma_2\alpha(\overline{m}\ell)_{1/2}, p^{\rho_2}\big)
\overline{\chi}\big((\sigma_1m_{1/2}+\sigma_2\ell_{1/2}p^{n-r})^2\big).
\end{align*}
This evaluation of $G(m,\ell,c)$ is equivalent to the statement of the lemma.
\end{proof}

\subsubsection{$p$-adic van der Corput estimates}
The character sum $\mathcal{C}(\tilde{m}, \ell_1, \ell_2)$, introduced in~\eqref{C(m)}, will be estimated 
using Lemma~\ref{Second derivative test}, the Second Derivative Test.
\begin{lemma}\label{p-adic van der Corput estimates}
Let $\mathcal{C}(\tilde{m}, \ell_1, \ell_2)$ be defined as in~\eqref{C(m)}. 
Then $\mathcal{C}(\tilde{m}, \ell_1, \ell_2)$ vanishes unless $\ell_1, \ell_2 \in \Z_p^{\times 2}$ and
$p^{n-r} \mid \tilde{m}$, in which case, we have
$$\mathcal{C}(\tilde{m}, \ell_1, \ell_2) \ll  p^{n+(r+\Omega)/2},$$
where $\Omega={\rm ord}_p(\ell_1-\ell_2)$.
\end{lemma}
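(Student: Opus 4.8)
\emph{Proof proposal.} The plan is to insert the explicit formula for $G$ from Lemma~\ref{K(m,l,c)} into~\eqref{C(m)}, exploit the resulting cancellation, reduce the complete sum modulo $p^n$ to one modulo $p^r$, and then apply the $p$-adic second derivative test, Lemma~\ref{Second derivative test}. First I would record the structure of the sum. Since $(c,p)=1$, the vanishing criterion of Lemma~\ref{K(m,l,c)}, applied to the two factors $G(u,\ell_i,c)$, reads $u\in\Z_p^{\times 2}$ and $\ell_i\in\Z_p^{\times 2}$; hence $\mathcal C(\tilde m,\ell_1,\ell_2)=0$ unless $\ell_1,\ell_2\in\Z_p^{\times 2}$, and then the $u$-sum in~\eqref{C(m)} is restricted to $u\in\Z_p^{\times 2}$. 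Substituting the formula for $G$, the $u$-independent prefactor $p^{r/2}\chi(\alpha^2c^2)\theta(-2\alpha/p^n)\epsilon(-\overline 2\alpha,p^{\rho_1})$ multiplies its conjugate to give $p^{r}$, and, on writing $(\sigma_1u_{1/2}+\sigma_2(\ell_i)_{1/2}p^{n-r})^2=u\,(1+\sigma_1\sigma_2(\ell_i)_{1/2}p^{n-r}/u_{1/2})^2$, the $\overline\chi(u)$ produced by $\mathcal Z^{\sigma_1,\sigma_2}_{\ell_1,c}(u)$ cancels the $\chi(u)$ from $\overline{\mathcal Z^{\sigma_1',\sigma_2'}_{\ell_2,c}(u)}$. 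This leaves
\begin{equation*}
\mathcal C(\tilde m,\ell_1,\ell_2)=p^{r}\sum_{\sigma_1,\sigma_2,\sigma_1',\sigma_2'\in\{\pm1\}}\ \sum_{\substack{u\bmod p^n\\ u\in\Z_p^{\times 2}}}\Psi(u)\,e\!\left(\frac{\tilde m\overline d\,u}{p^n}\right),
\end{equation*}
where $\Psi(u)$ (depending on the four signs and on $\ell_1,\ell_2,c$) is a product of bounded $\epsilon$-factors with $\overline\chi\big((1+a_1p^{n-r}/u_{1/2})^2\big)\chi\big((1+a_2p^{n-r}/u_{1/2})^2\big)$, and $a_1=\sigma_1\sigma_2(\ell_1)_{1/2}$, $a_2=\sigma_1'\sigma_2'(\ell_2)_{1/2}$.

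Next I would observe that each $\Psi$ is \emph{exactly} $p^r\Z_p$-periodic: replacing $u$ by $u+p^rt$ changes $u_{1/2}$ by $\mathbf{M}_{p^{r}}$ (by~\eqref{square power series expansion}), hence $a_ip^{n-r}/u_{1/2}$ by $\mathbf{M}_{p^{n}}$ and each $(1+a_ip^{n-r}/u_{1/2})^2$ by $\mathbf{M}_{p^{n}}$, so the $\chi$-factors do not change since $\chi$ has conductor $p^n$, while the $\epsilon$-factors depend only on $u\bmod p$. Splitting the $u$-sum over residues $u_0\bmod p^r$ and summing the geometric progression in the complementary variable modulo $p^{n-r}$ then forces $p^{n-r}\mid\tilde m$ for non-vanishing, and, writing $\tilde m=p^{n-r}\tilde m'$,
\begin{equation*}
\mathcal C(\tilde m,\ell_1,\ell_2)=p^{n}\sum_{\sigma_1,\sigma_2,\sigma_1',\sigma_2'}\ \sum_{\substack{u_0\bmod p^r\\ u_0\in\Z_p^{\times 2}}}\Psi(u_0)\,e\!\left(\frac{\tilde m'\overline d\,u_0}{p^r}\right).
\end{equation*}
It now suffices to bound each inner sum modulo $p^r$ by $O\big(p^{(r+\Omega)/2+\varepsilon}\big)$.

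I would bound these inner sums by Lemma~\ref{Second derivative test}, applied with $\omega=\tilde m'\overline d/p^r\in\mathbb R$ and base level $\kappa_0=1$. Expanding $\Psi(u_0+p^\kappa t)$ via~\eqref{chi expansion}, \eqref{square power series expansion} and the power series for $\log_p$ gives, for every $\kappa\geq1$,
\begin{equation*}
\Psi(u_0+p^\kappa t)=\Psi(u_0)\,\theta\!\left(\phi_1(u_0)p^\kappa t+\tfrac12\phi_2(u_0)p^{2\kappa}t^2+\mathbf{M}_{p^{\Omega(\kappa)}}\right),
\end{equation*}
with the leading ($k=1$) term of $\log_p$ governing everything: $\phi_1(u_0)$ and $\phi_2(u_0)$ are unit multiples of $\alpha(a_2-a_1)/(u_0(u_0)_{1/2}p^r)$ and $\alpha(a_2-a_1)/(u_0^2(u_0)_{1/2}p^r)$ respectively, up to terms of strictly larger $p$-order (the quadratic $\log_p$-term carries the factor $a_2^2-a_1^2=\ell_2-\ell_1$, hence is of order $\Omega+n-2r>\Omega-r$). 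By~\eqref{ord-sqrt}, $\mathrm{ord}_p\big((\ell_1)_{1/2}-(\ell_2)_{1/2}\big)=\mathrm{ord}_p(\ell_1-\ell_2)=\Omega$, while $\mathrm{ord}_p\big((\ell_1)_{1/2}+(\ell_2)_{1/2}\big)=0$; thus $\mathrm{ord}_p(a_2-a_1)=\Omega$ in the ``matching'' sectors $\sigma_1\sigma_2=\sigma_1'\sigma_2'$ and $=0$ in the others. Hence $\phi_1\in p^{-r}\Z_p$ and $\phi_2\in p^{\mathrm{ord}_p(a_2-a_1)-r}\Z_p^{\times}$, so $\upsilon=r$ and $\lambda=r-\mathrm{ord}_p(a_2-a_1)$. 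The error $\Omega(\kappa)$ collects the cubic and higher powers of $t$ and the higher $\log_p$- and square-root-terms, each of which brings an extra $p^{n-r}$ per increment of the $\log_p$-index and an extra $p^\kappa$ per increment of the $t$-degree, so $\Omega(\kappa)\geq 3\kappa-\lambda\geq\min(2\kappa-\lambda+1,0)$ for $\kappa\geq1$, which is exactly the admissibility hypothesis. Lemma~\ref{Second derivative test} then yields each inner sum $\ll(p^r+p^\upsilon+p^\lambda)p^{\min(\kappa_1-\lambda+\kappa_0,0)}\ll p^{r-\lambda/2}$ up to the absolute factor $p^{\kappa_0}$, i.e.\ $\ll p^{(r+\Omega)/2}$ in the matching sectors (the bottleneck) and $\ll p^{r/2}$ in the others; summing the sixteen sectors and multiplying by $p^n$ gives $\mathcal C(\tilde m,\ell_1,\ell_2)\ll p^{n+(r+\Omega)/2}$. (If $\Omega\geq r$, so that $\lambda\leq0$, one uses instead the trivial bound $\sum_{u\bmod p^n}|G|^2\ll p^{n+r}\leq p^{n+(r+\Omega)/2}$; if $\ell_1=\ell_2$ the asserted bound is vacuous.)

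The main obstacle is the third step: producing the $p$-adic Taylor expansion in the precise form required by Lemma~\ref{Second derivative test}, and in particular verifying that $\mathrm{ord}_p\phi_2$ equals $\mathrm{ord}_p(a_2-a_1)-r$ \emph{exactly}. This relies on the identity $a_2^2-a_1^2=\ell_2-\ell_1$ (and, more generally, on every coefficient coming from the higher $\log_p$-terms retaining the $p$-adic order of $\ell_1-\ell_2$), which forces the leading term to dominate so that it cannot be cancelled; carrying this out, together with the routine but bookkeeping-heavy verification of $\Omega(\kappa)\geq\min(2\kappa-\lambda+1,0)$ and the minor $p=3$ corrections hidden in~\eqref{chi expansion} and~\eqref{square power series expansion}, is where the bulk of the work lies.
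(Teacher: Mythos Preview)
Your proposal is correct and follows essentially the same route as the paper: insert the explicit evaluation of $G$ from Lemma~\ref{K(m,l,c)}, expand the resulting $\chi$-factors $p$-adically via~\eqref{chi expansion} and~\eqref{square power series expansion}, identify $\phi_1,\phi_2$ of $p$-order $\Omega-r$, and invoke the second derivative test (Lemma~\ref{Second derivative test}). The minor differences are cosmetic. First, you reduce the sum modulo $p^n$ to one modulo $p^r$ by observing that $\Psi$ is $p^r\Z_p$-periodic and summing the geometric progression directly, whereas the paper appeals to Lemma~\ref{statphase-lemma}\,(1) for the same purpose; both yield the divisibility $p^{n-r}\mid\tilde m$ immediately. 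Second, you keep track of all sixteen sign sectors $(\sigma_1,\sigma_2,\sigma_1',\sigma_2')$ and correctly single out the ``matching'' ones $\sigma_1\sigma_2=\sigma_1'\sigma_2'$ as the bottleneck via~\eqref{ord-sqrt}; the paper, by using the same pair $(\sigma_1,\sigma_2)$ for both the $\ell_1$- and $\ell_2$-factors in its definition of $\Phi^{\sigma_1,\sigma_2}_{\ell_1,\ell_2}$, effectively only writes out the matching sectors (the non-matching ones give the smaller contribution $p^{n+r/2}$ anyway, so this does not affect the stated bound). Your choice $\upsilon=r$ is slightly looser than the paper's $\upsilon=\lambda=r-\Omega$, but since the $p^n$-term (resp.\ $p^r$-term, after your reduction) dominates in Lemma~\ref{Second derivative test}, the resulting bound is the same up to the harmless factor $p^{\kappa_0}=p$ that both arguments incur.
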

\begin{proof}
For $\sigma_1, \sigma_2 \in \{\pm 1\}$ and $u\in \Z_p^{\times2}$, 
we define
\begin{align}\label{G}
\Phi_{\ell_1, \ell_2}^{\sigma_1, \sigma_2}(u)=
\overline{\chi}\big((\sigma_1u_{1/2}+\sigma_2\ell_{1/2}p^{n-r})^2\big)
\chi\big((\sigma_1u_{1/2}+\sigma_2\ell_{1/2}p^{n-r})^2\big),
\end{align}
and 
\begin{align*}
\mathcal{W}(\alpha, c)=\chi(\alpha^2c^2)\overline{\chi}(\alpha^2c^2)
\epsilon\big(-\overline{2}\alpha, p^{\rho_1}\big)\overline{\epsilon}\big(-\overline{2}\alpha, p^{\rho_1}\big).
\end{align*}
By Lemma~\ref{K(m,l,c)},  $\mathcal{C}(\tilde{m}, \ell_1, \ell_2)$ vanishes 
unless $\ell_1, \ell_2 \in \Z_p^{\times 2}$. 
Under the validity of this condition, we derive that
\begin{equation*}
\mathcal{C}(\tilde{m}, \ell_1, \ell_2)=p^r \mathcal{W}(\alpha, c) \sum_{\sigma_1, \sigma_2 \in \{\pm 1\}}
\sum_{u \bmod p^n}\boldsymbol{\epsilon}_{\ell_1, \ell_2}^{\sigma_1, \sigma_2}(u)
\Phi_{\ell_1, \ell_2}^{\sigma_1, \sigma_2}(u)e\left(\frac{\tilde{m}u}{dp^n}\right),
\end{equation*}
where
\begin{equation*}
\boldsymbol{\epsilon}_{\ell_1, \ell_2}^{\sigma_1, \sigma_2}(u)
=\epsilon\big(\sigma_1\alpha, p^{\rho_1}\big)
\epsilon\big(\sigma_2\alpha(\overline{u}\ell_1)_{1/2}, p^{\rho_2}\big)
\overline{\epsilon}\big(\sigma_1\alpha, p^{\rho_1}\big)
\overline{\epsilon}\big(\sigma_2\alpha(\overline{u}\ell_2)_{1/2}, p^{\rho_2}\big)
\end{equation*}
depends only on the class of $u$ modulo $p$; in particular, 
$\boldsymbol{\epsilon}_{\ell_1, \ell_2}^{\sigma_1, \sigma_2}(u+p^\kappa t)=
\boldsymbol{\epsilon}_{\ell_1, \ell_2}^{\sigma_1, \sigma_2}(u)$ for every
$\kappa \geq 1$ and every $t\in \mathbb{Z}_p$.
Let, for brevity, 
\begin{equation*}
\eta=\eta(u)=\sigma_1u_{1/2}+\sigma_2\ell_{1/2}p^{n-r}.
\end{equation*}
Recalling the expansion~\eqref{square power series expansion}, we find that
\begin{equation*}
\eta(u+p^\kappa t)=\eta(u)+\frac{\sigma_1}{2u_{1/2}}p^\kappa t 
- \frac{\sigma_1}{8u_{1/2}^3} p^{2\kappa} t^2 +\mathbf{M}_{p^{3\kappa}}.
\end{equation*}
Fourther, recalling also~\eqref{chi expansion}, we conclude that
\begin{align}\label{eta}
\overline{\chi}(\eta^2(u+p^\kappa t))&=\overline{\chi}(\eta^2) 
\theta \bigg(-\frac{2\alpha}{\eta p^n}\bigg(\frac{\sigma_1}{2 u_{1/2}}p^\kappa t 
- \frac{\sigma_1}{8 u_{1/2}^3} p^{2\kappa} t^2\bigg) +\frac{2\alpha }{2\eta^2 p^n}
\bigg(\frac{\sigma_1}{2 u_{1/2}}p^\kappa t \bigg)^2 + \mathbf{M}_{p^{3\kappa-n}}\bigg)\nonumber
\\&=\overline{\chi}(\eta^2)\theta \bigg(-\frac{\sigma_1\alpha}{\eta p^n u_{1/2}}p^\kappa t
+\frac{\alpha(\sigma_1\eta + u_{1/2}) }{4\eta^2 p^n u_{1/2}^3}p^{2\kappa} t^2
+\mathbf{M}_{p^{3\kappa-n}}\bigg).
\end{align}
Combining~\eqref{G} and~\eqref{eta} , for every $u\in \mathbb{Z}_p^{\times}$ and every $\kappa \geq 1$,
we find that the function 
$\Phi_{\ell_1, \ell_2}^{\sigma_1, \sigma_2}(u)$ satisfies
\begin{align*}
\Phi_{\ell_1, \ell_2}^{\sigma_1, \sigma_2}(u+p^\kappa t)=
\Phi_{\ell_1, \ell_2}^{\sigma_1, \sigma_2}(u)
\theta \bigg(&-\frac{\sigma_1\alpha}{\eta_1 p^n u_{1/2}}p^\kappa t
+\frac{\sigma_1\alpha}{\eta_2 p^n u_{1/2}}p^\kappa t\\
&+\frac{\alpha(\sigma_1\eta_1 + u_{1/2}) }{4\eta_1^2 p^n u_{1/2}^3}p^{2\kappa} t^2
-\frac{\alpha(\sigma_1\eta_2 + u_{1/2}) }{4\eta_2^2 p^n u_{1/2}^3}p^{2\kappa} t^2
+\mathbf{M}_{p^{3\kappa-n}}\bigg),
\end{align*}
where $\eta_1=\eta_1(u)=\sigma_1u_{1/2}+\sigma_2(\ell_1)_{1/2}p^{n-r}$ and
 $\eta_2=\eta_2(u)=\sigma_1u_{1/2}+\sigma_2(\ell_2)_{1/2}p^{n-r}$.
We write above equations as 
\begin{align*}
\Phi_{\ell_1, \ell_2}^{\sigma_1, \sigma_2}(u+p^\kappa t)=
\Phi_{\ell_1, \ell_2}^{\sigma_1, \sigma_2}(u)
\theta \bigg(&\psi_{1, \ell_1, \ell_2}^{\sigma_1, \sigma_2}(u)p^\kappa t
+\psi_{2, \ell_1, \ell_2}^{\sigma_1, \sigma_2}(u)p^{2\kappa} t^2
+\mathbf{M}_{p^{3\kappa-n}}\bigg),
\end{align*}
with the obvious definitions for $\psi_{1, \ell_1, \ell_2}^{\sigma_1, \sigma_2}(u)$
 and $\psi_{2, \ell_1, \ell_2}^{\sigma_1, \sigma_2}(u)$.
We note that, for every $\kappa \geqslant 1$ and every $u \in \Z_p^{\times 2}$,
\begin{equation*}
(u+p^\kappa t)\cdot\left(\frac{1}{u}-\frac{1}{u^{2}}p^\kappa t + \frac{1}{u^3}
p^{2\kappa} t^2\right) \in 1 + p^{3\kappa}\Z_p,
\end{equation*}
so that
\begin{equation*}
(u+p^\kappa t)^{-1} \equiv \frac{1}{u}-\frac{1}{u^{2}}p^\kappa t + \frac{1}{u^3}
p^{2\kappa} t^2  \;\bmod\; p^{3\kappa}.
\end{equation*}
It follows that
\begin{equation*}
\eta^{-1}=(\sigma_1u_{1/2}+\sigma_2\ell_{1/2}p^{n-r})^{-1} 
\equiv \frac{1}{\sigma_1u_{1/2}}-\frac{\sigma_2\ell_{1/2}}{u}p^{n-r} + \frac{\ell}{\sigma_1u_{1/2}^3}
p^{2n-2r}  \;\bmod\; p^{3n-3r}.
\end{equation*}
and
\begin{equation*}
\eta^{-2}=(\sigma_1u_{1/2}+\sigma_2\ell_{1/2}p^{n-r})^{-2} 
\equiv \frac{1}{u}-\frac{2\sigma_2\ell_{1/2}}{\sigma_1u_{1/2}^3}p^{n-r} + \frac{3\ell}{u^2}
p^{2n-2r}  \;\bmod\; p^{3n-3r}.
\end{equation*}
In particular, we show that
\begin{align*}
&\psi_{1, \ell_1, \ell_2}^{\sigma_1, \sigma_2}(u) =
\frac{\sigma_1\sigma_2\alpha}{u_{1/2}^3} p^{-r} 
\big((\ell_1)_{1/2}-(\ell_2)_{1/2}\big)
-\frac{\alpha}{u^2}p^{n-2r}(\ell_1-\ell_2)+\mathbf{M}_{p^{2n-3r}},\\
&\psi_{2, \ell_1, \ell_2}^{\sigma_1, \sigma_2}(u) =-\frac{3\sigma_1\sigma_2\alpha}{4u_{1/2}^5} p^{-r} \big((\ell_1)_{1/2}-(\ell_2)_{1/2}\big)
+\frac{\alpha}{u^3}p^{n-2r}(\ell_1-\ell_2)+\mathbf{M}_{p^{2n-3r}}.
\end{align*}
By applying Lemma~\ref{statphase-lemma} (1), we obtain 
\begin{equation*}
\mathcal{C}(\tilde{m}, \ell_1, \ell_2)=
p^{r+n-\kappa} \mathcal{W}(\alpha, c) \sum_{u} 
\boldsymbol{\epsilon}_{\ell_1, \ell_2}^{\sigma_1, \sigma_2}(u)
\Phi_{\ell_1, \ell_2}^{\sigma_1, \sigma_2}(u)
\theta\left(\frac{\tilde{m}u}{dp^n}\right),
\end{equation*}
where summation is over all $u\bmod p^\kappa$, such that
\begin{equation*}
\frac{\tilde{m}u}{d} +\frac{\sigma_1\sigma_2\alpha}{u_{1/2}^3} 
p^{n-r} \big((\ell_1)_{1/2}-(\ell_2)_{1/2}\big) \in p^{n-\kappa}\Z_p.
\end{equation*}
This condition can be written equivalently as
\begin{equation*}
\tilde{m}u_{1/2}^5+\sigma_1\sigma_2\alpha d
p^{n-r} \big((\ell_1)_{1/2}-(\ell_2)_{1/2}\big) 
\equiv 0  \bmod p^{n-\kappa},
\end{equation*}
which implies $p^{n-r} \mid \tilde{m}$ if we assume $r\geq n/2$.

In order to use Lemma~\ref{Second derivative test}, it remains to 
find ${\rm ord}_p\psi_{1, \ell_1, \ell_2}^{\sigma_1, \sigma_2}(u)$ 
and ${\rm ord}_p\psi_{2, \ell_1, \ell_2}^{\sigma_1, \sigma_2}(u)$. 
Using the fact~\eqref{ord-sqrt}, we have
\begin{equation*}
\text{ord}_p\big((\ell_1)_{1/2}-(\ell_2)_{1/2}\big)=\text{ord}_p(\ell_1-\ell_2).
\end{equation*}
Let $\Omega=\text{ord}_p(\ell_1-\ell_2)$, we get
$${\rm ord}_p\psi_{1, \ell_1, \ell_2}^{\sigma_1, \sigma_2}(u)=
{\rm ord}_p\psi_{2, \ell_1, \ell_2}^{\sigma_1, \sigma_2}(u)=\Omega-r.
$$ 
Applying Lemma~\ref{Second derivative test} with
\begin{align*}
\upsilon=\lambda=r-\Omega, \quad \Omega(\kappa)=\min(\Omega-r+3\kappa, 0)
\quad \omega=\frac{\tilde{m}}{dp^n}, 
\end{align*}
\begin{align*}
\kappa_0=1, \quad \kappa_1=\max(\lambda/2, 1)=\frac{r-\Omega}{2},
\end{align*}
we obtain
$$ \mathcal{C}(\tilde{m}, \ell_1, \ell_2)
\ll p^r(p^n+p^{r-\Omega}) p^{-(r-\Omega)/2}\ll p^{n+(r+\Omega)/2},$$
which immediately implies the announced bound.
\end{proof}

\section{The end game}

Now we are ready to prove Proposition~\ref{main prop}.
By Lemma~\ref{integral:lemma}, Lemma~\ref{zero} and 
Lemma~\ref{p-adic van der Corput estimates}, we get
\begin{align}\label{T estimates}
\mathcal{T}(L,M)\ll
\frac{N^{\varepsilon}M}{dp^n}\bigg( & p^{n+r}
\delta (\ell_1\equiv \ell_2\bmod p^r)
+p^{n+r-1}\delta (\ell_1\equiv \ell_2\bmod p^{r-1})\nonumber\\&
+\sum_{\substack{\Omega \ll \log L \\ p^{\Omega}\|\ell_1-\ell_2}}
\sum_{\substack{0\neq |\tilde{m}|\leq N^{\varepsilon}dp^nC/(cM)\\ p^{n-r}|\tilde{m}}}
p^{n+(r+\Omega)/2}\bigg).
\end{align}
By plugging~\eqref{T estimates} into~\eqref{L estimates} and using the estimate
$\lambda_g(\ell_1)\overline{\lambda_g(\ell_2)}\ll |\lambda_g(\ell_1)|^2+|\lambda_g(\ell_2)|^2$, we have
\begin{align*}
\mathcal{L}(L,M)\ll  \frac{N^{\varepsilon}M}{dp^n} &
\sum_{\ell_1\sim L }\frac{|\lambda_g(\ell_1)|^2}{\ell_1^{1/2}}
\bigg(\sum_{\substack{\ell_2\sim L \\ \ell_2\equiv \ell_1\bmod dp^r}}
 \frac{1}{\ell_2^{1/2}}p^{n+r} + \sum_{\substack{\ell_2\sim L 
 \\ \ell_2\equiv \ell_1\bmod dp^{r-1}}}
 \frac{1}{\ell_2^{1/2}}p^{n+r-1}\bigg)\\
&+\frac{N^{\varepsilon}M}{dp^n}\sum_{\Omega\ll \log L}
\sum_{\ell_1\sim L} \frac{|\lambda_g(\ell_1)|^2}{\ell_1^{1/2}}
  \sum_{\substack{\ell_2\sim L \\ \ell_2\equiv \ell_1\bmod dp^{\Omega}}}
  \frac{1}{\ell_2^{1/2}}\frac{dC}{cM}p^{n+(3r+\Omega)/2}.
\end{align*}
Recall that $M \ll p^{2n-r+\varepsilon}$ and $L\ll p^{r+\varepsilon}$, we get
\begin{align*}
\mathcal{L}(L,M) \ll & \frac{N^{\varepsilon}M}{d}p^r\bigg(1+\frac{L}{dp^r}\bigg)+
  \frac{N^{\varepsilon}M}{d}\bigg(\frac{dC}{cM}+\frac{CL}{cM}\bigg)p^{3r/2}\\
\ll&\frac{N^{\varepsilon}}{d}\bigg(p^{2n}+\frac{C}{c}p^{5r/2}\bigg).
\end{align*}
Therefore, one has
\begin{align*}
\widetilde{S}(N)&\ll \frac{N^{1/2+\varepsilon}}{p^{r/2}}
\sum_{\substack{c\leq C \\ (c,p)=1}}c^{-1}
\sum_{d|c}d^{1/2}\bigg(p^n+\frac{C^{1/2}}{c^{1/2}}p^{5r/4}\bigg)\\
&\ll N^{3/4+\varepsilon}\big(p^{n-3r/4}+p^{r/2}\big).
\end{align*}
As we point out in Remark~\ref{remark1}, the remaining cases are similar and in fact easier. 
Hence, this completes the proof of Proposition~\ref{main prop}.


\end{document}